\newtheorem{proposition}{Proposition}[section]
\newtheorem{lemma}[proposition]{Lemma}
\newtheorem{theorem}[proposition]{Theorem}
\newtheorem{corollary}[proposition]{Corollary}
\theoremstyle{definition}
\newtheorem{remark}[proposition]{Remark}
\newtheorem{example}[proposition]{Example}
\newcounter{theor}
\newtheorem{teor}[theor]{Theorem}
\DeclareMathOperator{\vol}{Vol}
\DeclareMathOperator{\Gr}{Gr}
\DeclareMathOperator{\SL}{SL}
\DeclareMathOperator{\GL}{GL}
\renewcommand{\O}{\mathrm{O}}
\DeclareMathOperator{\D}{D}
\DeclareMathOperator{\aff}{aff}
\newcommand{\R}{\mathbb{R}}
\newcommand{\C}{\mathbb{C}}
\newcommand{\K}{\mathcal{K}}
\newcommand{\di}{\Diamond}
\newcommand{\st}{\mathrm{st}}
\DeclareMathOperator{\W}{\mathrm{W}}
\def\lin{\mathop\mathrm{lin}\nolimits}
\def\aff{\mathop\mathrm{aff}\nolimits}
\def\conv{\mathop\mathrm{conv}\nolimits}
\newcommand{\func}[5]{\ensuremath{\begin{array}{cccl}
#1:&#2&\longrightarrow&#3\\&#4&\mapsto&#5\end{array}}}
\title[Rogers-Shephard inequality in the characterization of the difference body]{The role of the Rogers-Shephard inequality in the characterization of the difference body}
\author{Judit Abardia} 
\address{Institut f\"ur Mathematik, Goethe-Universit\"at Frankfurt am Main, 
Robert-Mayer-Str. 10, 60054 Frankfurt, Germany}
\email{abardia@math.uni-frankfurt.de}
\author{Eugenia Saor\'in G\'omez} 
\address{Institut für Algebra und Geometrie, Universit\"at Magdeburg, 
Universitätsplatz 2, 39106 Magdeburg, Germany}
\email{eugenia.saorin@ovgu.de}
\begin{document}

\thanks{First author is supported by DFG grant AB 584/1-1.
Second author is supported by Direcci\'on General  de Investigaci\'on MTM2011-25377 MCIT and FEDER}

\date{\today}

\subjclass[2000]{Primary 52A20 
52B45; 
Secondary
52A40
}

\keywords{Difference body, Rogers-Shephard inequality, $\GL(n)$-covariance, $o$-symmetrization, Brunn-Minkowski inequality, Minkowski valuation}

\begin{abstract}
The difference body operator enjoys different characterization results relying on its basic properties such as continuity, $\SL(n)$-covariance, Minkowski valuation or symmetric image. The Rogers-Shephard and the Brunn-Minkowski inequalities provide upper and lower bounds for the volume of the difference body in terms of the volume of the body itself. In this paper we aim to understand the role of the Rogers-Shephard inequality in characterization results of the difference body and, at the same time, to study the interplay among the different properties.
Among others, we prove that the difference body operator is the only continuous and $\GL(n)$-covariant operator from the space of convex bodies to the origin-symmetric ones which satisfies a Rogers-Shephard type inequality while every continuous and $\GL(n)$-covariant operator satisfies a Brunn-Minkowski type inequality. 
\end{abstract}

\maketitle

\section{Introduction}

Let $\K^n$ denote the set of convex bodies (compact and
convex sets) in $\R^n$. 
The support function of a convex body $K\in\K^n$ can be seen as its {\it analytic definition}, as $K$ 
is uniquely determined by means of it
\begin{equation}\label{e: h(K,u)}
h(K, v) = h_K(v)=\max \{ \langle v, x\rangle : x \in K \}.
\end{equation}
Here $v\in \R^n$ and $\langle v, x\rangle$ stands for the standard
inner product of $v$ and $x$ in $\R^n$.
If $A\subset\R^n$ is measurable, we write $\vol(A)$ to denote
its volume, that is, its $n$-dimensional Lebesgue measure. 
As usual, we write $\GL(n)$ and $\SL(n)$ to  denote the general linear
and special linear groups in $\R^n$. The unit sphere of $\R^n$ will be denoted by $S^{n-1}$.

The \emph{difference body $DK$ of $K\in\K^n$} is the vector (or Minkowski) sum of $K$ and
its reflection in the origin, i.e.,
\begin{equation}\label{e: def DK}
DK := K + (-K).
\end{equation}

The {\it difference body inequality} or {\it Rogers-Shephard inequality} (see e.g. \cite{rogers.shephard}) constitutes the fundamental (affine) inequality relating the volume of the difference body $DK$ and the volume of $K$.
It is usually introduced together with a lower bound, which is a direct consequence of the Brunn-Minkowski inequality:

\noindent {\it Let $K\in\K^n$. Then
\begin{equation}\label{eq: RS and BM}
2^n \vol(K) \leq \vol(DK) \leq \binom{2n}{n} \vol(K).
\end{equation}
Equality holds on the left-hand side if and only if $K$ is centrally symmetric and on the right hand side precisely if $K$ is a simplex.}

We say that an operator $\di:\K^n\to\K^n$ satisfies a \emph{Rogers-Shephard type inequality} (in short RS) if there exists a constant $C>0$ such that for all $K\in\K^n$,
\begin{equation}\label{eq: RS}
\vol(\di K)\leq C \vol(K).
\end{equation}
Analogously, $\di$ satisfies a \emph{Brunn-Minkowski type inequality} (in short BM) if there exists a  constant $c>0$ such that for all $K\in\K^n$,
 \begin{equation}\label{eq: BM}
c \vol(K)\leq\vol(\di K).
\end{equation}

Sometimes we will consider other functionals than volume. In this case, we will say that $\di:\K^n\longrightarrow \K^n$ satisfies a \emph{Rogers-Shephard type inequality} (respectively, a Brunn-Minkowski type inequality) \emph{for the functional $\phi:\K^n\to\R$} if in \eqref{eq: RS} (resp. \eqref{eq: BM}) the volume is replaced by $\phi$.

As an operator on convex bodies $$\func{D}{\K^n}{\K^n}{K}{DK,}$$ the difference body enjoys several properties. It is continuous in the Hausdorff metric, $\SL(n)$-covariant and homogeneous of degree $1$. An operator $\di:\K^n\longrightarrow \K^n$ is said to be \emph{$G$-covariant} for a group of transformations $G$ if  for any $K\in\K^n$ it holds
\[
\di(g K)=g\di K \text{ for any } g\in \,G,
\]
and it is \emph{homogeneous of degree $k\in\R$} if for any $K\in\K^n$,
\[
\di(\lambda K)=\lambda^k \di K \text{ for any } \lambda> 0.
\]

Further, $K\mapsto DK$ is a translation invariant Minkowski valuation. Here an operator $\di$ is a \emph{Minkowski valuation} if 
for any $K,L\in \K^n$ with $K\cup L \in \K^n$,
\[
\di(K\cup L)+\di(K\cap L)=\di(K)+\di(L),
\]
where the addition on $\K^n$ is the Minkowski addition. An operator $\di$ is \emph{translation invariant} if 
\[
\di(K+t)=\di(K) \text{ for any } t\in\R^n.
\]

In fact, in \cite{ludwig} M. Ludwig proved that already continuity, translation invariance, Minkowski valuation and $\SL(n)$-covariance are enough to classify the difference body operator.

\begin{teor}[\cite{ludwig}]\label{t: ludwig class DK} Let $n\geq 2$. An operator $\di: \K^n\longrightarrow\K^n$ is a continuous, translation invariant and $\SL(n)$-covariant Minkowski valuation if and only if there is a $\lambda\geq 0$ such that $\di K=\lambda DK$.
\end{teor}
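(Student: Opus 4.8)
The plan is to characterize all continuous, translation-invariant, $\SL(n)$-covariant Minkowski valuations $\di$ and show they reduce to scalar multiples of $D$. The natural strategy is to build up from low-dimensional combinatorial data to the full operator using the valuation property, the covariance, and continuity in that order.

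\medskip

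\noindent\emph{Sketch of a proof proposal.} First I would reduce the problem to understanding $\di$ on polytopes, using the fact that polytopes are dense in $\K^n$ (in the Hausdorff metric) and that $\di$ is continuous; thus it suffices to determine $\di P$ for every polytope $P$ and check the resulting formula extends continuously. The key structural tool is the valuation property: by the inclusion-exclusion relation $\di(K\cup L)+\di(K\cap L)=\di K+\di L$, the operator is determined on a polytope by its values on the cells of any simplicial (or cell) decomposition, so one can decompose an arbitrary polytope into simplices and reduce to computing $\di$ on a single simplex, or even on segments and simplices of all dimensions. A useful first step is to evaluate $\di$ on lower-dimensional convex bodies: I would first compute $\di$ on a point (by translation invariance $\di(\{t\})=\di(\{0\})$ is a fixed body, and valuation/homogeneity considerations force it to be a point, in fact the origin after possibly absorbing a translation), and then on segments.

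\medskip

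\noindent The heart of the argument is $\SL(n)$-covariance. Once $\di$ is known to be a translation-invariant valuation, I would use $\SL(n)$-covariance to pin down its value on a fixed reference simplex $T$ (say the standard simplex) up to the finitely many free parameters allowed by the symmetries of $T$: the stabilizer of $T$ inside $\SL(n)$ (together with the permutation symmetries realized by unimodular maps) acts on the body $\di T$ via its support function, and covariance forces $\di T$ to be invariant under this action, which severely restricts its shape. Combining this with the behavior under the full $\SL(n)$ action as $T$ is sheared and stretched, and with the valuation relations that express $\di$ of a subdivided simplex in terms of $\di$ of its pieces, should force $\di T=\lambda\, DT$ for some scalar $\lambda\geq 0$ independent of $T$. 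The translation invariance is what guarantees the center is at the origin and kills any additive (e.g. moment-type) terms that a general $\SL(n)$-covariant valuation might otherwise carry.

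\medskip

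\noindent I expect the main obstacle to be the classification of the possible ``local'' values on the reference simplex, i.e.\ showing that the only $\SL(n)$-equivariant, translation-invariant, valuation-compatible assignment on simplices is a multiple of the difference body. Concretely, one must rule out extra $\SL(n)$-covariant Minkowski-valued terms (for instance operators built from lower-dimensional faces or from the centroid) and show they are either forbidden by translation invariance or incompatible with the valuation identity across subdivisions; the subtlety is that Minkowski addition is only additive on support functions, so the valuation equation is a nonlinear constraint on the bodies and must be handled at the level of support functions, where it becomes the requirement that $h_{\di(K\cup L)}+h_{\di(K\cap L)}=h_{\di K}+h_{\di L}$ as functions on $S^{n-1}$. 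Once the value on simplices is forced to be $\lambda\, DT$, extending to all polytopes by the valuation property and then to all of $\K^n$ by continuity is routine, and checking that $K\mapsto\lambda DK$ indeed has all four properties is immediate from the definition of $D$.
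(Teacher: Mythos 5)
First, a point of reference: the paper does not prove this statement at all---it is quoted as background from \cite{ludwig} (Theorem \ref{t: ludwig class DK} is Ludwig's classification), so your attempt can only be measured against Ludwig's original argument. Your overall architecture (polytopes are dense, the valuation property reduces the problem to simplices, $\SL(n)$-covariance pins down the value on a reference simplex) does match the coarse outline of that proof, but the sketch has genuine gaps at exactly the load-bearing points. Two technical ones: you invoke ``valuation/homogeneity considerations,'' yet homogeneity is \emph{not} among the hypotheses and cannot be imported via McMullen's homogeneous decomposition, since Minkowski valuations take values in the cone of support functions rather than a vector space; establishing (or circumventing) degree-one behavior is part of the work. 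Likewise, ``determined by the cells of any simplicial decomposition'' is not automatic: $(\K^n,+)$ is only a cancellative semigroup, so inclusion--exclusion must be run on the real-valued valuations $K\mapsto h(\di K,u)$ for each fixed $u$ (or via an extension theorem to the polytope lattice), and the convexity of the resulting candidate support function then has to be restored as a separate constraint.

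The decisive gap is the step you flag yourself: ``should force $\di T=\lambda\,DT$'' is precisely the content of the theorem, and the stabilizer-of-the-simplex argument you propose cannot deliver it on its own. Indeed, without translation invariance the answer is strictly larger: by Theorem \ref{t: class Wannerer} (and Theorem \ref{t: Wannerer n=2} of this paper), continuous $\SL(n)$- (even $\GL(n)$-) covariant Minkowski valuations include all operators $aK+b(-K)+c\conv(\{0\}\cup K)+d\conv(\{0\}\cup(-K))$, every one of which is invariant under the simplex symmetries you describe, so symmetry of $\di T$ under the stabilizer does not ``severely restrict'' the shape enough. Ludwig's actual mechanism is to dissect the simplex by hyperplanes through the origin, translate the valuation identity into Cauchy-type functional equations for the support functions evaluated at carefully chosen directions, solve these, and only then use translation invariance to eliminate the $\conv(\{0\}\cup\,\cdot\,)$ terms and force $a=b$, yielding $\lambda DK$. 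Your proposal correctly locates this obstacle but supplies no substitute for the functional-equation analysis, so as it stands it is a plausible road map rather than a proof.
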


If the image of the operator $\di$ is restricted to centrally symmetric convex bodies, i.e., symmetric with respect to the origin, a characterization in the same direction is provided by R.\! Gardner, D.\! Hug and W.\! Weil in \cite{gardner.hug.weil1}. Following their notation let us introduce the notion of $o$-symmetrization.
Given a class of sets $\mathcal{C}$ and its subclass $\mathcal{C}_s$ of central symmetric (symmetric with respect to the origin) elements of $\mathcal{C}$, an operator $\di:\mathcal{C}\longrightarrow \mathcal{C}_s$ is called an {\it $o$-symmetrization}.

\begin{teor}[\cite{gardner.hug.weil1}]\label{t: GHW class DK}
Let $n\ge 2$. An $o$-symmetrization $\di:\K^n\to\K^n_s$ is continuous, translation invariant and $\GL(n)$-covariant if and only if there is a $\lambda\geq 0$ such that $\di K=\lambda DK$.
\end{teor}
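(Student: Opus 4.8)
The plan is to prove the nontrivial (``only if'') implication, the converse being routine: for fixed $\lambda\ge 0$ the operator $K\mapsto\lambda DK$ has origin-symmetric image, is continuous and translation invariant, and is $\GL(n)$-covariant because $D(gK)=gK+(-gK)=g(K+(-K))=g\,DK$ for every $g\in\GL(n)$. First I would record two normalizations forced by the hypotheses. Since $\lambda\Id\in\GL(n)$ for $\lambda>0$, covariance gives $\di(\lambda K)=\lambda\,\di K$, so $\di$ is homogeneous of degree one; and since $\di(\{o\})=\di(g\{o\})=g\,\di(\{o\})$ for every $g\in\GL(n)$, the body $\di(\{o\})$ is bounded and $\GL(n)$-invariant, hence $\di(\{o\})=\{o\}$, so that translation invariance yields $\di(\{p\})=\{o\}$ for every point $p$.

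Second, I would determine $\di$ on segments. Let $S=[-e_1,e_1]$. The maps $g_t=\mathrm{diag}(1,t,\dots,t)$ fix $S$, so covariance yields $g_t\,\di S=\di S$ for all $t>0$; letting $t\to\infty$ and using that $\di S$ is bounded forces $\di S\subseteq\R e_1$. As $\di S$ is origin symmetric we get $\di S=[-a,a]e_1=\lambda\,DS$ for some $a\ge 0$, where $\lambda:=a/2$ (recall $DS=[-2e_1,2e_1]$). Moreover this constant is independent of the segment: any centred segment $[-cu,cu]$ equals $gS$ for a suitable $g\in\GL(n)$, whence $\di([-cu,cu])=g\,\di S=\lambda\,g\,DS=\lambda\,D(gS)=\lambda\,D([-cu,cu])$, and translation invariance removes the centring assumption. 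Thus $\di S'=\lambda\,DS'$ for every segment $S'$, with one universal $\lambda$.

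Third, and this is the heart of the argument, I would extend the identity from segments to an arbitrary $K\in\K^n$ by a degeneration. Fix the direction $e_1$ and let $t\to 0$ in $g_t$: the bodies $g_tK$ converge in the Hausdorff metric to the orthogonal projection $P(K)=[\min_{x\in K}x_1,\max_{x\in K}x_1]\,e_1$ of $K$ onto the $x_1$-axis, which is a segment. Continuity and covariance then give
\[
\di\bigl(P(K)\bigr)=\lim_{t\to 0}\di(g_tK)=\lim_{t\to 0}g_t\,\di K=P(\di K).
\]
The left-hand side equals $\lambda\,D(P(K))$ by the segment step, whose support value in direction $e_1$ is $\lambda\,\bigl(h(K,e_1)+h(K,-e_1)\bigr)=\lambda\,h(DK,e_1)$; the right-hand side is the projection of the origin-symmetric body $\di K$, whose support value in direction $e_1$ is $h(\di K,e_1)$. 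Hence $h(\di K,e_1)=\lambda\,h(DK,e_1)$. Applying a rotation $\rho\in\SO(n)\subseteq\GL(n)$ (or repeating the flattening along an arbitrary axis) yields $h(\di K,u)=\lambda\,h(DK,u)$ for every $u\in S^{n-1}$, and therefore $\di K=\lambda\,DK$.

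The step I expect to require the most care is the degeneration: one must justify that $g_tK\to P(K)$ and $g_t\,\di K\to P(\di K)$ in the Hausdorff metric and that $\di$ remains continuous on the resulting lower-dimensional limits, so that the identity $\di(P(K))=P(\di K)$ is legitimate even when $P(K)$ collapses to a point, a case in which both sides vanish and the width $h(DK,e_1)$ is zero. Once this limiting passage is secured, reading off a single support value in each direction suffices, and, notably, no valuation or additivity property of $\di$ is needed.
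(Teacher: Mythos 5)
Your proof is correct, but it takes a genuinely different route from the one in the paper. The paper obtains this statement (following \cite{gardner.hug.weil1}, and in the stronger form of Theorem \ref{th: ti}, without the $o$-symmetrization hypothesis) through heavy machinery: continuity plus $\GL(n)$-covariance is first shown to be equivalent to projection covariance (Lemma \ref{diCGimpliesP}), this yields the representation $h(\di K,x)=h_M\left(h_K(x),h_{-K}(x)\right)$ by a planar convex body $M$ (Theorem \ref{thm2s}, whose hardest point is verifying that the derived homogeneous function is a support function), and only at the very end is translation invariance used, via the one-line trick of translating $K$ by the $t$ with $\langle x,t\rangle=\frac{1}{2}(h(-K,x)-h(K,x))$, which forces $h(\di K,x)=\frac{1}{2}h_M(1,1)h(DK,x)$. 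You instead use translation invariance \emph{early}, to propagate the segment identity $\di S'=\lambda DS'$ from centred segments (pinned down by the diagonal maps $g_t$ and boundedness) to \emph{all} segments, and then prove by hand, via the degeneration $g_tK\to K|\lin u$, exactly the rank-one instance of projection covariance, $\di(K|\lin u)=(\di K)|\lin u$, which suffices to read off $h(\di K,u)=\lambda h(DK,u)$ in every direction. This flattening limit is the same mechanism hidden inside the proof of Lemma \ref{diCGimpliesP} in \cite{gardner.hug.weil1}, but you need it only for projections onto lines, so your argument is self-contained and avoids both the body $M$ and the support-function verification; what the paper's approach buys in exchange is the general $M$-sum representation, which is then reused for all the other characterizations (Theorem \ref{t:RS+cont+GL aK+b(-K)}, Theorem \ref{BMinGL}, etc.). Two remarks: your placement of translation invariance is not cosmetic but essential --- the operator $K\mapsto D(\conv(\{0\}\cup K))$ is a continuous, $\GL(n)$-covariant $o$-symmetrization, and it satisfies your centred-segment identity with $\lambda=1$ but fails it for non-centred segments, which is precisely where your degeneration step would break for it; and the $o$-symmetry of $\di K$ is in fact only used to centre $\di S$, where it is redundant, since $-\Id\in\GL(n)$ and $-S=S$ already force $\di S=-\di S$ --- consistent with the fact that the paper's Theorem \ref{th: ti} holds without the $o$-symmetrization assumption.
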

We would like to notice that this result is obtained as a by-product of a systematic study of operations between convex sets (see \cite{gardner.hug.weil2} and \cite{milman.rotem} too).

However, none of the above classifications makes use of the fundamental affine isoperimetric inequalities attached to it, namely, \eqref{eq: RS and BM}.
In the spirit of these classifications of $DK$, we derive the following ones in which Rogers-Shephard inequality plays a prominent role.

\begin{theorem}\label{t:RS+cont+GL aK+b(-K)}Let $n\geq 2$. An operator $\di:\K^n\to\K^n$ is  continuous, $\GL(n)$-covariant and satisfies a Rogers-Shephard type inequality if and only if there are $a,b\geq 0$ such that 
 $\di K=aK+b(-K)$.
\end{theorem}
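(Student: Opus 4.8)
The plan is to prove the two implications separately, the reverse direction being the substantial one.

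For the ``if'' part, suppose $\di K=aK+b(-K)$ with $a,b\ge 0$. Continuity is immediate from the continuity of Minkowski addition and dilation, and $\GL(n)$-covariance follows from the linearity of $g$, which gives $-(gK)=g(-K)$ and hence $\di(gK)=a\,gK+b\,g(-K)=g\di K$. For the Rogers--Shephard type inequality, assume without loss of generality $a\ge b$; since $b(-K)$ is a homothet of $a(-K)$ of ratio $b/a\le 1$, it is contained in a translate of $a(-K)$, so $\di K\subseteq aK+a(-K)+t=a\,DK+t$ for some $t\in\R^n$. The classical bound \eqref{eq: RS and BM} then yields $\vol(\di K)\le a^n\vol(DK)\le a^n\binom{2n}{n}\vol(K)$, so \eqref{eq: RS} holds with $C=\max\{a,b\}^n\binom{2n}{n}$.

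For the ``only if'' part, I would first record two free consequences of the hypotheses. Covariance under the dilations $\lambda\,\mathrm{Id}\in\GL(n)$ forces $\di(\lambda K)=\lambda\di K$ for $\lambda>0$, so $\di$ is automatically homogeneous of degree $1$; and the Rogers--Shephard bound forces $\di$ to preserve degeneracy, since $\dim K<n$ gives $\vol(K)=0$, whence $\vol(\di K)=0$ and $\di K$ is again lower-dimensional. I would then fix a direction, say $e_1$, and study the single real functional $F(L):=h(\di L,e_1)$. Using $h(gM,v)=h(M,g^{\mathsf T}v)$ together with covariance gives $F(gL)=h(\di L,g^{\mathsf T}e_1)$, so $F$ is invariant under every $g\in\GL(n)$ with $g^{\mathsf T}e_1=e_1$, i.e.\ under all linear maps fixing the first coordinate. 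Applying the diagonal maps $g_\varepsilon=\mathrm{diag}(1,\varepsilon,\dots,\varepsilon)$, which lie in this stabilizer, and letting $\varepsilon\to 0$, continuity of $\di$ collapses $L$ onto its projection to the $x_1$-axis, so that $F(L)=F([s e_1,t e_1])$ with $[s,t]=[-h(L,-e_1),\,h(L,e_1)]$ the $x_1$-range of $L$. Moreover, since $g=\mathrm{diag}(1,\lambda,\dots,\lambda)$ fixes the axis-segment $[se_1,te_1]$ pointwise, covariance confines $\di([se_1,te_1])$ to the $x_1$-axis, say $\di([se_1,te_1])=[\mu(s,t)e_1,\nu(s,t)e_1]$, and $F([se_1,te_1])=\nu(s,t)$. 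Thus $h(\di K,e_1)=\nu\bigl(-h(K,-e_1),h(K,e_1)\bigr)$, and replacing $e_1$ by an arbitrary $v$ via a $g$ with $g^{\mathsf T}e_1=v$ shows that the \emph{same} function $\nu$ controls every direction: $h(\di K,v)=\nu\bigl(-h(K,-v),h(K,v)\bigr)$ for all $K$ and $v$.

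It then remains to determine $\nu$ on axis-segments and prove that it is linear, namely $\nu(s,t)=at-bs$ with $a,b\ge 0$; this is the step I expect to be the main obstacle. Homogeneity gives that $\nu$ is positively $1$-homogeneous, covariance under $e_1\mapsto-e_1$ relates $\mu$ and $\nu$, and continuity links the segment values to the point values as $s\to t$, but these alone do not force linearity. Here the Rogers--Shephard hypothesis must do the decisive work: the assignment $v\mapsto\nu(-h(K,-v),h(K,v))$ has to be the support function of a convex body for \emph{every} $K$ and to obey the volume bound \eqref{eq: RS}, and I expect that combining the sublinearity requirement with the inequality forces $\nu$ to be affine, hence linear with nonnegative coefficients. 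Once this is established, one has $h(\di K,v)=a\,h(K,v)+b\,h(K,-v)=h\bigl(aK+b(-K),v\bigr)$ for all $v$, and since a convex body is determined by its support function, $\di K=aK+b(-K)$, completing the characterization.
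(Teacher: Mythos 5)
Your ``if'' direction is fine, and your reduction of the ``only if'' direction to a single function $\nu$ with $h(\di K,v)=\nu\bigl(-h(K,-v),h(K,v)\bigr)$ is correct: it is essentially a hands-on re-derivation of the representation the paper imports from Gardner--Hug--Weil (Theorem \ref{thm2s}, via projection covariance, Lemma \ref{diCGimpliesP}); your limits of $\mathrm{diag}(1,\varepsilon,\dots,\varepsilon)$ are exactly the mechanism behind that equivalence. But the argument stops precisely where the theorem lives: you never prove that $\nu$ is linear, and you say so yourself (``this is the step I expect to be the main obstacle\dots I expect that\dots''). Two things are missing. First, the \emph{joint} sublinearity of $\nu$ in $(s,t)$: the requirement that $v\mapsto\nu(-h(K,-v),h(K,v))$ be a support function for every $K$ does force it, but this is a genuine theorem (Theorem 8.2 of \cite{gardner.hug.weil1}, invoked in the proof of Theorem \ref{thm2s}), not a side remark; it is what turns $\nu$ into the support function $h_M$ of a planar convex body $M$ (with $\nu(s,t)=h_M(t,-s)$). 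Second, and decisively, your guessed mechanism --- that the volume bound \eqref{eq: RS} on full-dimensional bodies combined with sublinearity should force $\nu$ to be affine --- is not how RS enters, and it is unclear how to run it: the paper never uses \eqref{eq: RS} quantitatively at all.

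What RS actually contributes is exactly the qualitative ``free consequence'' you recorded and then abandoned: $\dim K\le n-1$ implies $\dim\di K\le n-1$. The paper's closing step (Proposition \ref{equalDim} together with Lemma \ref{width ; width_comp}) runs as follows: take $K$ of dimension $n-1$ inside a hyperplane $\{\langle x,u\rangle=c\}$ with $c>0$, so $\omega(K,u)=0$ and $h(K,u)=c$. The representation gives $\omega(\di K,u)=c\,\omega(M,(1,-1))$, which in your coordinates reads $c\bigl(\nu(1,1)+\nu(-1,-1)\bigr)$. On the other hand, sublinearity yields the width bound $\omega(\di K,\cdot)\ge h_M(1,1)\,\omega(K,\cdot)$, whence $\aff K\subseteq\aff\di K$ for non-trivial $\di$ (Lemma \ref{width ; width_comp}.ii; the trivial operator is the case $a=b=0$); combined with the RS-degeneracy $\dim\di K\le n-1$ this forces $\aff\di K=\aff K$, hence $\omega(\di K,u)=0$, hence $\omega(M,(1,-1))=0$. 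Sublinearity then pins $M$ down to a (possibly translated) segment parallel to the diagonal $\{x=y\}$, so $h_M$ is linear on the relevant half-plane $H=\{s+t\ge 0\}$, giving $\di K=aK+b(-K)$ with $a,b\ge 0$. So the missing step is not a refinement of your volume heuristic; it requires this degenerate-body width computation --- for which, ironically, you already had every ingredient in hand.
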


As a consequence of this we obtain the following corollary.

\begin{corollary}\label{cor:TI-RS}
Let $n\geq 2$. An $o$-symmetrization $\di:\K^n\to\K^n_s$ is
continuous, $\GL(n)$-covariant and satisfies a Rogers-Shephard type inequality if and only if there is a $\lambda\geq 0$ such that $\di K=\lambda DK$.
\end{corollary}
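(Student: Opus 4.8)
The plan is to deduce Corollary~\ref{cor:TI-RS} directly from Theorem~\ref{t:RS+cont+GL aK+b(-K)}, whose conclusion gives the general form of the operator, and then impose the extra hypothesis that $\di$ is an $o$-symmetrization. Concretely, suppose $\di:\K^n\to\K^n_s$ is continuous, $\GL(n)$-covariant and satisfies a Rogers-Shephard type inequality. Since $\K^n_s\subset\K^n$, the operator $\di$ is in particular a continuous, $\GL(n)$-covariant operator $\K^n\to\K^n$ satisfying RS, so Theorem~\ref{t:RS+cont+GL aK+b(-K)} applies and yields constants $a,b\ge 0$ with $\di K=aK+b(-K)$ for every $K\in\K^n$. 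The work then reduces to showing that the additional constraint $\di K\in\K^n_s$ for all $K$ forces $a=b$, since in that case $\di K=a\bigl(K+(-K)\bigr)=aDK$ and we take $\lambda=a\ge 0$.

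The key step is therefore to show that $aK+b(-K)$ is origin-symmetric for every convex body $K$ if and only if $a=b$. The sufficiency is immediate: $aK+a(-K)=aDK$ and $DK$ is always centrally symmetric. For the necessity I would test the identity on a body with no symmetry that would allow cancellation — the simplest choice is a segment placed off-origin, or better, a body whose Steiner point or support function distinguishes the two summands. The cleanest route is to compare support functions: using $h_{aK+b(-K)}(v)=a\,h_K(v)+b\,h_K(-v)$, the set $aK+b(-K)$ is origin-symmetric precisely when $h_{\di K}(v)=h_{\di K}(-v)$ for all $v$, i.e. $a\,h_K(v)+b\,h_K(-v)=a\,h_K(-v)+b\,h_K(v)$, which rearranges to $(a-b)\bigl(h_K(v)-h_K(-v)\bigr)=0$ for all $v$ and all $K$. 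Choosing any $K$ that is not centrally symmetric (so that $h_K(v)\neq h_K(-v)$ for some $v$), e.g. a simplex with a vertex at the origin, forces $a=b$.

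I do not expect a genuine obstacle here, since the corollary is essentially an immediate specialization of the theorem; the only point requiring care is to exhibit a single convex body witnessing $h_K(v)\neq h_K(-v)$, which is trivial. I would present the argument compactly: invoke Theorem~\ref{t:RS+cont+GL aK+b(-K)} to get $\di K=aK+b(-K)$, then use the support-function computation above together with one non-symmetric test body to conclude $a=b$, giving $\di K=\lambda DK$ with $\lambda=a$. The converse direction is trivial because $K\mapsto\lambda DK$ is manifestly an $o$-symmetrization that is continuous, $\GL(n)$-covariant (as $D$ is even $\SL(n)$-covariant and commutes with the full linear group up to the reflection symmetry) and satisfies the Rogers-Shephard inequality \eqref{eq: RS and BM} scaled by $\lambda^n$.
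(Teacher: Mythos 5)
Your proposal is correct and follows essentially the same route as the paper: invoke Theorem \ref{t:RS+cont+GL aK+b(-K)} to get $\di K=aK+b(-K)$, then force $a=b$ from the symmetry $h(\di K,v)=h(\di K,-v)$ tested on a single body with $h_K(v)\neq h_K(-v)$ (the paper's test body has $h(K,-u)=0$, $h(K,u)>0$; your non-symmetric simplex serves the same purpose). One cosmetic remark: the needed condition is only $K\neq -K$ (not central symmetry about an arbitrary point), and $D$ is in fact fully $\GL(n)$-covariant without any caveat, but neither point affects the validity of your argument.
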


In this paper we also aim to study the interaction of continuity and $\GL(n)$-covariance with other properties which are usually attached to the difference body. Several results will be obtained in this direction in Section \ref{sec: GL}. Nevertheless, only when adding the translation invariance property a characterization result for the difference body operator is achieved.

\begin{theorem}\label{th: ti}
Let $n\geq 2$. An operator $\di:\K^n\to\K^n$ is continuous, $\GL(n)$-covariant and translation invariant if and only if there is a $\lambda\geq 0$ such that $\di K=\lambda D K$.
\end{theorem}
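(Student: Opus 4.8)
The plan is to prove Theorem \ref{th: ti} by reducing it to the earlier classification Theorem \ref{t: GHW class DK} of Gardner, Hug and Weil. That result already characterizes $\lambda DK$ among continuous, translation invariant, $\GL(n)$-covariant \emph{$o$-symmetrizations}, i.e.\ operators whose image lies in $\K^n_s$. Hence the entire content of the present theorem, beyond what is already known, is to show that continuity, $\GL(n)$-covariance and translation invariance \emph{force} the image to consist of origin-symmetric bodies. Once we know $\di K=-\di K$ for every $K$, we are exactly in the hypotheses of Theorem \ref{t: GHW class DK} and the conclusion $\di K=\lambda DK$ follows immediately. So the single thing to establish is the symmetry $\di K = -\di K$.

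To obtain the symmetry I would exploit the interaction of $\GL(n)$-covariance with translation invariance using a well-chosen linear map. First I would look at the origin: translation invariance gives $\di(K+t)=\di K$ for all $t$, and $\GL(n)$-covariance gives $\di(gK)=g\,\di K$. Combining these, for any $g\in\GL(n)$ and any translate we can compare $\di$ of $K$ and of an affinely transformed copy of $K$. The key step is to apply the map $g=-\Id$, which is the crucial element of $\GL(n)$ linking a body to its reflection. Since $-\Id\in\GL(n)$, covariance yields $\di(-K)=-\di K$. The task is then to show $\di(-K)=\di K$, for then $\di K=-\di K$ as desired. This is where translation invariance must do the work: I would show that $-K$ and $K$ are related by a composition of a translation and maps under which $\di$ is equivariant, so that $\di(-K)$ and $\di K$ coincide up to the forced reflection.

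Concretely, I would argue as follows. For a fixed $K$, choose an interior point, which after a harmless translation (allowed by translation invariance) we place at the origin; more robustly, I would pick a point $c$ and note that $-K = -(K - c) - c$. By translation invariance $\di(-K)=\di(-(K-c))$ and $\di K=\di(K-c)$, so it suffices to treat centrally positioned bodies. Then applying $-\Id$ to $K-c$ gives $-(K-c)=(-\Id)(K-c)$, and covariance yields $\di(-(K-c))=-\di(K-c)$. Chaining these equalities produces $\di(-K)=-\di K$ on one hand and the translation identities collapse $\di(-K)$ to $\di K$ on the other, forcing $\di K=-\di K$. I would make this precise by carefully tracking how translation invariance eliminates the translational part of the affine map sending $K$ to $-K$, isolating the linear part $-\Id$.

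The main obstacle I anticipate is exactly this bookkeeping: $K$ and $-K$ differ by the affine map $x\mapsto -x+2c$ (reflection through the center $c$), whose linear part is $-\Id$ but whose translation part is nonzero, and one must verify that translation invariance genuinely allows one to discard that translation part in the presence of covariance. The subtlety is that $\GL(n)$-covariance controls only the linear part while translation invariance controls only translations, so the argument hinges on decomposing the reflection as a linear map composed with a translation and applying each property to the appropriate factor; this decomposition, and the verification that $\di$ is well behaved under the composition, is where care is needed. After that, invoking Theorem \ref{t: GHW class DK} is routine.
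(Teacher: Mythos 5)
Your reduction strategy has a genuine gap, and it sits exactly where you anticipated trouble: the claim that translation invariance ``collapses $\di(-K)$ to $\di K$.'' The bodies $K$ and $-K$ are translates of one another \emph{only when $K$ is centrally symmetric}; for a simplex $T$ there is no $t$ with $-T=T+t$, so no amount of bookkeeping with translations can identify $\di(-K)$ with $\di K$. Tracing your own chain: $\di(-K)=\di(-(K-c))=-\di(K-c)=-\di K$. This identity, $\di(-K)=-\di K$, holds for \emph{every} $\GL(n)$-covariant operator (since $-\Id\in\GL(n)$) and carries no symmetry information whatsoever about the image; you never obtain the second relation $\di(-K)=\di K$ that your argument needs. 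Worse, your proposed symmetry step uses only translation invariance and covariance, never continuity, so if it worked it would also apply to the operator $\di K=K-c(K)$, where $c(K)$ is the centroid (taken in the affine hull of $K$): this operator is translation invariant and $\GL(n)$-covariant, yet $\di T=T-c(T)$ is not origin-symmetric for a simplex $T$. Hence image-symmetry simply cannot be extracted from those two properties by formal manipulation; continuity must enter in an essential way, and in fact establishing symmetry of $\di K$ for arbitrary $K$ is essentially as hard as the theorem itself.

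The paper's proof avoids this by making the problem \emph{direction-wise} rather than global. Continuity plus $\GL(n)$-covariance yield projection covariance (Lemma \ref{diCGimpliesP}), and then Theorem \ref{thm2s} gives the pointwise representation $h(\di K,x)=h_M\left(h_K(x),h_{-K}(x)\right)$ for a planar convex body $M$. Now translation invariance is exploited one direction at a time: for fixed $x$, choose $t$ with $\langle x,t\rangle=\frac{1}{2}\left(h(-K,x)-h(K,x)\right)$; then $h_{K+t}(x)=h_{-(K+t)}(x)=\frac{1}{2}h_{DK}(x)$, and homogeneity of $h_M$ gives $h(\di K,x)=h(\di(K+t),x)=\frac{1}{2}h_M(1,1)\,h_{DK}(x)$, i.e.\ $\di K=\lambda DK$ with $\lambda=\frac{1}{2}h_M(1,1)\geq 0$. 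The point is that one only needs to symmetrize the two support values $h_K(x)$, $h_{-K}(x)$ in a single direction, which a translation can always do, whereas your argument needs to symmetrize the whole body, which it cannot. If you want to keep your reduction to Theorem \ref{t: GHW class DK}, you would still have to invoke Theorem \ref{thm2s} to prove the image is symmetric, at which point the paper's direct computation is shorter.
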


The role of a Brunn-Minkowski type inequality in classifying the difference body happens not to be relevant accompanied of $\GL(n)$-covariance and continuity: as we shall prove in Theorem \ref{BMinGL}, satisfying BM is a consequence of the join of both, $\GL(n)$-covariance and continuity. We provide several examples showing that without $\GL(n)$-covariance
and/or continuity, the difference body is far from being unique.

\smallskip
The paper has the following outline: in Section \ref{s: proj cov} we prove Theorem \ref{th: ti}. Further, in Section \ref{s: RS classif} we study the role of Rogers-Shephard inequality in the already mentioned (classical) classifications of the difference body and
provide the proof of Theorem \ref{t:RS+cont+GL aK+b(-K)}. In Section \ref{sec: GL} we investigate
the interplay of continuity and $\GL(n)$-covariance with other properties, such as monotonicity,  additivity or BM. Finally, we provide several
examples of operators which satisfy these properties and have no direct relation with the difference body, since they are not continuous or $\GL(n)$-covariant. 

\section{Projection covariance is a powerful tool: proof of Theorem \ref{th: ti}}\label{s: proj cov}

Following the ideas of \cite{gardner.hug.weil1}, in this section we will prove that projection covariance happens to be a very powerful 
assumption in order to classify an operator in the spirit described in the introduction. 
An operator $\di: \K^n\to \K^n$ is \emph{projection covariant} if for any $E\in\Gr(k,n)$, $1\leq k\leq n-1$ and any $K\in \K^n$, $$\di(K|E)=(\di K)|E,$$ where $A|E$ is the orthogonal projection of the set $A$ onto $E\in\Gr(k,n)$. As usual, $\Gr(k,n)$ is the Grassmannian of linear $k$-dimensional subspaces of $\R^n$.
We shall see that, in our context, projection covariance is equivalent to $\GL (n)$-covariance and continuity. 
Moreover, it provides us with an {\it almost} explicit description (see Theorem \ref{thm2s}) of the image of the operators $\di: \K^n\to \K^n$ sharing this property in terms of the support function.

Our main results rely strongly on slight variations of \cite[Lemma 7.4, Lemma 8.1 and Theorem 8.2]{gardner.hug.weil1}. 
We include (most of) the details of the proof for completeness.

\begin{theorem}\label{thm2s}
Let $n\ge 2$. The operator $\di:{\mathcal{K}}^n\rightarrow{\mathcal{K}}^n$ is projection covariant if and only if there is a planar convex body $M\subset\R^2$ such that
\begin{equation}\label{ns}
h(\di K,x)=h_{M}\left(h_K(x),h_{-K}(x)\right),
\end{equation}
for all $K\in {\mathcal{K}}^n$ and all $x\in\R^n$.
\end{theorem}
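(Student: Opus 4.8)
The plan is to prove the two directions separately, with the forward direction (projection covariance implies the representation \eqref{ns}) being the substantial one. First I would establish the easy converse: assuming \eqref{ns} holds for some planar body $M$, I must check that the operator $\di$ so defined is genuinely projection covariant. The key observation is that the support function commutes with orthogonal projection in the sense that $h_{K|E}(x) = h_K(x)$ for every $x\in E$, since maximizing $\langle x,\cdot\rangle$ over $K$ versus over its projection $K|E$ gives the same value when $x$ lies in $E$. Feeding this into \eqref{ns} shows $h(\di(K|E),x) = h_M(h_{K|E}(x), h_{-(K|E)}(x)) = h_M(h_K(x), h_{-K}(x)) = h(\di K, x) = h((\di K)|E, x)$ for all $x\in E$, which yields $\di(K|E) = (\di K)|E$.

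For the forward direction, I would exploit projection covariance in low dimensions to reduce everything to a two-dimensional datum. The strategy, following the cited lemmas of Gardner-Hug-Weil, is to apply projection covariance with $k=1$: for a line $E = \spa\{u\}$, the projection $K|E$ is the segment $[\,-h_{-K}(u)\,u,\ h_K(u)\,u\,]$, so it is completely determined by the pair of numbers $(h_K(u), h_{-K}(u))$. Projection covariance forces $(\di K)|E = \di(K|E)$ to depend only on this segment, hence only on the pair $(h_K(u), h_{-K}(u))$. This means the value $h(\di K, u) = h((\di K)|E, u)$ is a function solely of $(h_K(u), h_{-K}(u))$; call this function $F$, so that $h(\di K, u) = F(h_K(u), h_{-K}(u))$. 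The crucial point is that $F$ does not depend on $u$ nor on the ambient dimension: rotating and using $\GL$-type symmetry (inherited through projections onto different lines) shows $F$ is a fixed function on the relevant subset of $\R^2$, and one must verify that the segment $K|E$ really only sees the two support-function values.

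The main obstacle will be upgrading this pointwise functional relation into the statement that $F = h_M$ for a bona fide planar convex body $M$. One must show that the function $(s,t)\mapsto F(s,t)$, defined a priori only on pairs arising as $(h_K(u), h_{-K}(u))$, extends to (or already is) the restriction of a support function of some convex set $M\subset\R^2$. The natural route is to prove that $F$ is positively homogeneous of degree one and sublinear (subadditive and positively homogeneous) in $(s,t)$, since these are exactly the conditions characterizing support functions of convex bodies in $\R^2$. Positive homogeneity of degree one for $\di$ transfers from scaling $K$ by $\lambda>0$, which scales both arguments of $F$; subadditivity should follow from the fact that $h(\di K, \cdot)$ is itself a support function (hence sublinear) combined with how the arguments $(h_K, h_{-K})$ vary, using that projection covariance is tested on all lines simultaneously. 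Here one needs to be careful that the pairs $(h_K(u), h_{-K}(u))$ range over a full two-dimensional cone as $K$ and $u$ vary, so that $F$ is pinned down on enough of $\R^2$ to determine $M$ uniquely via \eqref{e: h(K,u)}.

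Finally I would assemble the pieces: having produced the planar body $M$ with $F = h_M$ on the relevant domain, equation \eqref{ns} holds by construction for all $K$ and all $x$ (extending from unit vectors $u$ to all $x\in\R^n$ by positive homogeneity of both sides in $x$). I expect the technical heart of the argument to be the verification that the candidate $F$ is sublinear and thus a support function — this is where the slight variations on \cite[Lemma 7.4, Lemma 8.1 and Theorem 8.2]{gardner.hug.weil1} do the real work, and where continuity of $\di$ (needed to ensure $M$ is closed, hence a convex body rather than merely a convex set) enters.
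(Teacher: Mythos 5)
Your overall skeleton is the same as the paper's (and Gardner--Hug--Weil's): reduce via projections onto lines to a relation $h(\di K,u)=F\bigl(h_K(u),h_{-K}(u)\bigr)$, show $F$ is independent of $u$, positively $1$-homogeneous and sublinear, hence a support function of a planar body, with an easy converse. But there is a genuine gap at the homogeneity step: you claim that $1$-homogeneity of $F$ ``transfers from scaling $K$ by $\lambda>0$,'' i.e.\ from $\di(\lambda K)=\lambda\,\di K$ --- yet homogeneity of $\di$ is \emph{not} a hypothesis; the only assumption is projection covariance, so as written this step is circular. In the paper, homogeneity is extracted from projection covariance itself: for unit vectors $u,v$ with $\langle u,v\rangle=\alpha\in[0,1]$, projecting the segment $\di[-su,tu]\subset l_u$ onto $l_v$ scales it by $\alpha$, which gives $f_v(\alpha s,\alpha t)=\alpha f_u(s,t)$; interchanging $u$ and $v$ and composing yields $f_u(\alpha^2 s,\alpha^2 t)=\alpha^2 f_u(s,t)$, hence $1$-homogeneity of each $f_u$, and the same identity (now using homogeneity) shows $f_u$ is independent of $u$. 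Your appeal to ``rotating and $\GL$-type symmetry'' gestures at the independence of $u$, but homogeneity and independence both come from this single two-line projection computation, which your proposal does not supply.

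A second, related misstep: the theorem is a pure equivalence with projection covariance as the only hypothesis, so your closing claim that continuity of $\di$ is needed to make $M$ closed would, if true, mean you prove less than the stated ``only if'' direction. No continuity is needed: once $F$ is finite, $1$-homogeneous and convex (convexity is the part both you and the paper delegate to Theorem 8.2 of Gardner--Hug--Weil), it is automatically the support function of a nonempty compact convex set; alternatively, the paper exhibits $M$ directly as $\di K_0$ for $K_0=[-e_2,e_1]$, which lies in the coordinate plane by projection covariance. Two smaller points: the pairs $\bigl(h_K(u),h_{-K}(u)\bigr)$ fill only the half-plane $H=\{(s,t):-s\le t\}$, so $M$ is \emph{not} uniquely determined (the paper remarks this explicitly), contrary to your claim; and in the converse direction you should first note that \eqref{ns} forces $\di(K|E)\subset E$ --- because $h_{K|E}$, hence the right-hand side of \eqref{ns}, depends on $x$ only through $x|E$ --- before concluding $\di(K|E)=(\di K)|E$ from equality of support functions on $E$.
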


If the operator $\di$ satisfies the above hypothesis, we will say that the planar convex body $M$ obtained by this result is the \emph{associated body to $\di$}. However, it is known that $M$ needs not to be unique (see e.g. \cite[Section 8]{gardner.hug.weil1}).

\begin{proof}
We first show that there is a homogeneous of degree 1 function $f:H\to\R$ such that
\begin{equation}\label{441s}
h(\di K,x)=f\left(h_{K}(x),h_{-K}(x)\right),\quad\forall x\in\R^n,
\end{equation}
with $H=\{(s,t)\in\R^2: -s\le t\}$.

Let $u\in S^{n-1}$ and $l_u=\lin{u}$. Since $\di:{\mathcal{K}}^n\rightarrow{\mathcal{K}}^n$ is projection covariant, we have
\begin{equation}\label{eqbs}
(\di K)|\,l_u=\di(K|\,l_u).
\end{equation}
If $I$ is a closed interval in $l_u$, then, $\di I\subset l_u$. Thus we can find functions $f_u:H\to\R$, $g_u:H\to\R$,
such that
\begin{equation}\label{eqn1s}
\di [-su,tu]=[-f_u(s,t)u,g_u(s,t)u],
\end{equation}
and $-f_u(s,t)\leq g_u(s,t)$ whenever $(s,t)\in H$. 

We let $0\le \alpha\le 1$ and choose $v\in S^{n-1}$ such that $\langle u, v\rangle=\alpha$. Applying (\ref{eqbs}) with $K=[-su,tu]$ and $l_v$ instead of $l_u$, and \eqref{eqn1s}, we obtain
\begin{align*}
(\di[-su,tu])|\,l_v&=[-f_{u}(s,t)u,g_{u}(s,t)u]|\,l_v=
\alpha[-f_{u}(s,t)v,g_{u}(s,t)v]
\\
&=\di([-su,tu]|\,l_v)=\di[-\alpha sv,\alpha tv]=
[-f_{v}(\alpha s,\alpha t)v,g_{v}(\alpha s,\alpha t)v],
\end{align*}
for all $(s,t)\in H$.  Therefore $f_v(\alpha s,\alpha t)=\alpha f_u(s,t)$ and  $g_v(\alpha s,\alpha t)=\alpha g_u(s,t)$, for all $(s,t)\in H$. We also obtain 
\begin{equation}\label{alphav}
f_u(\alpha s,\alpha t)=\alpha f_v(s,t)
\end{equation}
by interchanging $u$ and $v$, and hence
$$
f_u(\alpha^2 s,\alpha^2 t)=\alpha f_v(\alpha s,\alpha t)=\alpha^2f_u(s,t),$$
for all $(s,t)\in H$.  

Taking $r=\alpha^2$, we have $f_u(rs,rt)=rf_u(s,t),$
for $0\le r\le 1$ and $s,t\in H$.  Replacing $s$ and $t$ by $s/r$ and $t/r$, respectively, it follows that
$f_u$ is homogeneous of degree 1.

Next, we need to prove that for any $u,v\in S^{n-1}$, $f_u=f_v$ and $g_u=g_v$.  First we fix $u\in S^{n-1}$ and let $v\in S^{n-1}$ be such that $\langle u, v\rangle>0$. We put $\alpha=\langle u, v\rangle$.  Using (\ref{alphav}) and the homogeneity of $f_u$, we obtain
$$\alpha f_v(s,t)=f_u(\alpha s,\alpha t)=\alpha f_u(s,t),$$
for all $s,t\ge 0$.  This shows that $f_v=f_u$ for all such $v$ and consequently $f_u=f$ is independent of $u$.
Applying the same arguments to $g_u$ we obtain $g_u=g$ and that it is also homogeneous of degree 1.

Joining (\ref{eqbs}) and (\ref{eqn1s}) and using that $f_u=f$ and $g_u=g$, we obtain
\begin{align*}
(\di K)|\,l_u&=[-h_{\di K}(-u)u,h_{\di K}(u)u]
\\&=\di (K|\,l_u)=\di[-h_{K}(-u)u,h_{K}(u)u]
=[-f\left(h_{-K}(u),h_K(u)\right)u,g\left(h_{-K}(u),h_K(u)\right)u],
\end{align*}
for all $u\in S^{n-1}$. Thus,
\begin{equation}\label{hdknew}
h_{\di K}(-u)=f\left(h_{-K}(u),h_{K}(u)\right),\quad
h_{\di K}(u)=g\left(h_{-K}(u),h_{K}(u)\right),\end{equation}
for all $u\in S^{n-1}$, which implies $f(s,t)=g(t,s)$ for $(s,t)\in H$. Now, it is enough to observe that $f$ and $g$ determine $\di K$ for any
$u\in S^{n-1}$ and 
using the homogeneity of $f$ we obtain (\ref{441s}) for any $u\in\R^n$.

Next we prove that $f:H\to\R$ is a support function. Notice that with \eqref{441s} we have obtained, as in \cite[Lemma 8.1]{gardner.hug.weil1}, a homogeneous of degree $1$ function $f$. Thus, we can use Theorem 8.2 in \cite{gardner.hug.weil1} to obtain the convexity of $f$.

Indeed, considering the convex body $K_0=[-e_2,e_1]$, the segment joining the points $-e_2$ with $e_1$, and the projection covariance of $\di$, one obtains that $M=\di K_0$ is a feasible body associated to $\di$, using the properties of $f$.

The converse of the result follows directly from the properties of support functions.
\end{proof}

Centrally symmetric convex bodies play a special role when dealing with the difference body, as they are the only {\it fixed points} (up to maybe a constant) of the operator.
From now on and for the sake of brevity, we will write \emph{$o$-symmetric convex sets} for origin or centrally symmetric sets.

\begin{remark}\label{r: proj cov identity property}
From the above result it immediately follows that if $K$ is $o$-symmetric, then
$h(\di K,u)=h_M(1,1)h(K,u)$ and $h_M(1,1)\geq 0$. 
\end{remark}

In \cite{gardner.hug.weil1} it was proved that an $o$-symmetrization enjoys continuity and $\GL(n)$-covariance if and only if it is projection covariant \cite[Lemma 4.3 and Corollary 8.3]{gardner.hug.weil1}. The same proof works for a non-necessarily $o$-symmetric operator $\di:\K^n\to\K^n$. Hence, we have the following lemma.

\begin{lemma}[Corollary 8.3 in \cite{gardner.hug.weil1}]\label{diCGimpliesP}
The operator $\di:{\mathcal{K}}^n\rightarrow {\mathcal{K}}^n$ is continuous and $\GL(n)$-covariant if and only if it is projection covariant.
\end{lemma}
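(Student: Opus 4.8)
The plan is to prove both implications separately, leaning on the explicit representation of Theorem~\ref{thm2s} for the forward direction and on an approximation of orthogonal projections by elements of $\GL(n)$ for the converse.

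First I would treat the implication that projection covariance implies continuity and $\GL(n)$-covariance. By Theorem~\ref{thm2s}, projection covariance furnishes a planar convex body $M$ with $h(\di K,x)=h_M(h_K(x),h_{-K}(x))$ for all $K\in\K^n$ and all $x\in\R^n$. Continuity is then immediate: if $K_j\to K$ in the Hausdorff metric, then $h_{K_j}\to h_K$ and $h_{-K_j}\to h_{-K}$ uniformly on $S^{n-1}$, and since $h_M$ is continuous (indeed Lipschitz on $\R^2$, with constant the circumradius of $M$) the functions $h(\di K_j,\cdot)$ converge uniformly to $h(\di K,\cdot)$, whence $\di K_j\to \di K$. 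For $\GL(n)$-covariance I would use the standard identities $h_{\phi K}(x)=h_K(\phi^T x)$ and $h_{-\phi K}(x)=h_{-K}(\phi^T x)$ valid for every $\phi\in\GL(n)$; substituting into the representation gives $h(\di(\phi K),x)=h_M(h_K(\phi^T x),h_{-K}(\phi^T x))=h(\di K,\phi^T x)=h(\phi\,\di K,x)$, so that $\di(\phi K)=\phi\,\di K$.

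For the converse, assume $\di$ is continuous and $\GL(n)$-covariant and fix $E\in\Gr(k,n)$ with $1\le k\le n-1$. The key idea is that the orthogonal projection $P_E$, although not invertible, is a Hausdorff limit of invertible maps: let $\phi_\varepsilon\in\GL(n)$ act as the identity on $E$ and as multiplication by $\varepsilon>0$ on $E^\perp$. For any $K\in\K^n$ one has $\phi_\varepsilon K\to K|E$ in the Hausdorff metric as $\varepsilon\to 0^+$, since each point $x_E+x_{E^\perp}$ is sent to $x_E+\varepsilon x_{E^\perp}\to x_E=P_E x$. Applying this both to $K$ and to the fixed body $\di K$, continuity yields $\di(\phi_\varepsilon K)\to \di(K|E)$ and $\phi_\varepsilon(\di K)\to (\di K)|E$. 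Combining with the covariance identity $\di(\phi_\varepsilon K)=\phi_\varepsilon(\di K)$ and letting $\varepsilon\to 0^+$ gives $\di(K|E)=(\di K)|E$, which is exactly projection covariance.

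The hard part will be this converse implication, and specifically the step of realizing a non-invertible orthogonal projection as a limit of $\GL(n)$ maps so that covariance can be transported to projections; the forward direction is essentially bookkeeping once Theorem~\ref{thm2s} is available. I would also point out that the argument nowhere uses $o$-symmetry of the image, which is precisely why the proof of \cite[Corollary 8.3]{gardner.hug.weil1} carries over verbatim to a general operator $\di:\K^n\to\K^n$.
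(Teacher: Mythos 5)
Your proof is correct and follows essentially the same route as the paper, which establishes this lemma by citing \cite[Lemma 4.3 and Corollary 8.3]{gardner.hug.weil1} and noting that the argument never uses $o$-symmetry of the image: your converse direction (approximating the projection onto $E$ by the invertible maps $\phi_\varepsilon$ that shrink $E^\perp$, then passing to the limit using continuity and covariance) is exactly the Gardner--Hug--Weil argument, and your forward direction via the representation $h(\di K,x)=h_M\left(h_K(x),h_{-K}(x)\right)$ of Theorem~\ref{thm2s} is the analogue of their Theorem 8.2/Corollary 8.3 step. Your closing remark that $o$-symmetry plays no role is precisely the paper's own justification for transferring the result to general operators $\di:\K^n\to\K^n$.
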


As a consequence of this result based on the projection covariance, implied by the $\GL(n)$-covariance and continuity, Theorem \ref{th: ti} follows, in the same way as Theorem B was proved in \cite{gardner.hug.weil1}. We include again (most of) the details for completeness.

\begin{proof}[Proof of Theorem \ref{th: ti}]
For $K\in {\mathcal{K}}^n$, it has to be shown $h(\di K,x)=\lambda h_{D K}(x)$ for any $x\in\R^n$. Since $\di$ is invariant under
translations, it is enough to find a translation $t$ for which $h(\di (K+t),x)=\lambda h_{D K}(x)$.  The vector $t\in\R^n$ satisfying
$\langle x, t\rangle=\frac{1}{2}(h(-K,x)-h(K,x))$ fulfills the stated property.
\end{proof}

\section{Rogers-Shephard inequality as classifying property}\label{s: RS classif}

In this section we show that if the condition of translation invariance in Theorem \ref{th: ti} is replaced by the condition RS -of satisfying a Rogers-Shephard type inequality (cf.\! \eqref{eq: RS})-, then other operators than the difference body (see Theorem \ref{t:RS+cont+GL aK+b(-K)}) appear, but they are not far from it. 

If instead, we replace, now in Theorem \ref{t: GHW class DK}, translation invariance by RS (notice that in this case the assumption of $o$-symmetrization is present) we do actually obtain the difference body (cf. Corollary \ref{cor:TI-RS}).

Now we aim to prove Theorem \ref{t:RS+cont+GL aK+b(-K)}, i.e., that a continuous and $\GL(n)$-covariant operator $\di:\K^n\to\K^n$ satisfying RS is 
necessarily of the form $\di K=aK+b(-K)$ for $a,b\geq 0$.
We will need the following lemmas for the proof.

\begin{lemma}\label{hM11=0}
Let $n\geq 2$ and $\di:\K^n\to\K^n$ be continuous and $\GL(n)$-covariant with $M\in\K^2$ as associated convex body. If $h_M(1,1)=0$, then $\di K=\{0\}$ for every $K\in\K^n$.
\end{lemma}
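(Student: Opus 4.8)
The plan is to combine the representation of Theorem \ref{thm2s} with the feasibility of $h_{\di K}$ — that is, the fact that it must be a genuine (sublinear) support function — for a carefully chosen \emph{non}-symmetric body. By Lemma \ref{diCGimpliesP} the operator $\di$ is projection covariant, so Theorem \ref{thm2s} gives an associated body $M\in\K^2$ with $h(\di K,x)=h_M(h_K(x),h_{-K}(x))$; following the construction in that proof I may take $M=\di K_0$ for $K_0=[-e_2,e_1]$, so that $h_M=h(\di K_0,\cdot)$. Note that the bare hypothesis $h_M(1,1)=0$ is, by itself, far too weak: via Remark \ref{r: proj cov identity property} it only forces $\di K=\{0\}$ for $o$-symmetric $K$. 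The rigidity must come from imposing sublinearity of the composite on a non-symmetric test body.

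First I would locate $M$. The identity $h_M(1,1)=0$ gives $M\subseteq\{(m_1,m_2):m_1+m_2\le 0\}$. For the reverse inclusion I would evaluate at $-e_1-e_2$: since $h_{K_0}(x)=\max\{x_1,-x_2\}$ one gets $h_{K_0}(-e_1-e_2)=h_{-K_0}(-e_1-e_2)=1$, whence, using $M=\di K_0$ and Theorem \ref{thm2s}, $h_M(-1,-1)=h(\di K_0,-e_1-e_2)=h_M(1,1)=0$. Therefore $M\subseteq\{m_1+m_2\ge 0\}$ as well, so $M$ is a (possibly degenerate) segment on the antidiagonal, $M=\{(a,-a):\alpha\le a\le\beta\}$ with $\alpha\le\beta$, and $h_M(s,t)=\max\{\beta(s-t),\alpha(s-t)\}$.

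The remaining and main point is to prove $\alpha=\beta=0$, and here I would use that $h_{\di K}$ is convex for the triangle $K=\conv\{0,e_1,e_2\}$, where $h_K(x)=\max\{0,x_1,x_2\}$ and $h_{-K}(x)=\max\{0,-x_1,-x_2\}$. Testing convexity of $x\mapsto h_M(h_K(x),h_{-K}(x))$ along the vertical segment through $(2,1),(2,0),(2,-1)$ yields $h_{\di K}(2,0)=2\beta$ against $\tfrac12\bigl(h_{\di K}(2,1)+h_{\di K}(2,-1)\bigr)=\tfrac32\beta$, so convexity forces $2\beta\le\tfrac32\beta$, i.e. $\beta\le 0$. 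The symmetric test along $(-2,1),(-2,0),(-2,-1)$ gives $h_{\di K}(-2,0)=-2\alpha$ against average $-\tfrac32\alpha$, forcing $\alpha\ge 0$. Together with $\alpha\le\beta$ this yields $\alpha=\beta=0$, hence $M=\{0\}$, $h_M\equiv 0$, and $\di K=\{0\}$ for every $K\in\K^n$.

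I expect the main obstacle to be conceptual rather than computational: the whole content lies in recognizing that the missing constraint is the sublinearity of the composite on a non-symmetric simplex, and in choosing the test directions so that the convexity inequality collapses to the two clean conditions $\beta\le 0$ and $\alpha\ge 0$. Once $M$ has been confined to the antidiagonal, the verification reduces to the short piecewise evaluations above; the delicate step is the reduction itself, namely deducing $h_M(-1,-1)=0$ from $h_M(1,1)=0$ through the distinguished body $K_0$.
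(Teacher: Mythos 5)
Your proof is correct, but it takes a genuinely different route from the paper's. The paper argues with a \emph{generic} associated body $M$ and never fixes $M=\di K_0$: it first derives $h_M(a,b)\le 0$ for all $a,b>0$ by testing subadditivity of $h(\di K,\cdot)$ at $e_1+e_2$ against $e_1$ and $e_2$ on bodies with $h_K(\pm e_1),h_K(\pm e_2)$ symmetric but $h_K(e_1+e_2)\neq h_K(-e_1-e_2)$; analogous tests give $h_M(1,-1)\le 0$ and $h_M(-1,1)\le 0$, so its $M$ collapses onto the \emph{diagonal} $\{x=y\}$ as a segment $[(-r,-r),(-s,-s)]$; it then finishes by noting $h(\di K,u)=h(\di K,-u)=-s\,\omega(K,u)$ and invoking nonnegativity of the width of the convex body $\di K$ to force $s=0$. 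You instead confine the concrete body $M=\di K_0$ to the \emph{antidiagonal} $\{m_1+m_2=0\}$ --- the discrepancy between the two normal forms is a nice illustration of the non-uniqueness of the associated body --- via the self-referential evaluation $h_M(-1,-1)=h(\di K_0,-e_1-e_2)=h_M\left(h_{K_0}(-e_1-e_2),h_{-K_0}(-e_1-e_2)\right)=h_M(1,1)=0$, which is legitimate since $(1,1)\in H$ so the representation of Theorem \ref{thm2s} applies there, and projection covariance onto $\lin\{e_1,e_2\}$ places $\di K_0$ in that plane; you then finish with midpoint convexity of $h(\di K,\cdot)$ on the simplex $\conv\{0,e_1,e_2\}$ along the collinear triples through $(2,0)$ and $(-2,0)$, whose evaluations I checked: they give $2\beta\le\tfrac32\beta$ and $-2\alpha\le-\tfrac32\alpha$, hence $\beta\le 0$ and $\alpha\ge 0$, which with $\alpha\le\beta$ forces $\alpha=\beta=0$, $M=\{0\}$, and $\di K=\{0\}$ for all $K$ (lower-dimensional $K$ included, since the representation holds for every $K$). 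As for what each approach buys: the paper's proof uses only the representation for an arbitrary associated $M$, so it does not lean on the feasibility claim $M=\di K_0$, which in the proof of Theorem \ref{thm2s} is asserted with just a one-line justification and on which your first step wholly depends (together with the tacit, but true, fact that the hypothesis $h_M(1,1)=0$ is independent of the choice of associated body, because $h_M$ is determined on $H$); your proof, in exchange, is more economical --- two half-plane confinements plus two three-point convexity tests --- and produces a genuine geometric localization of the actual body $\di K_0$ rather than sign inequalities for a representative.
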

\begin{proof}
We show first that $h_M(a,b)\leq 0$ for every $a,b\geq 0$. Let $K$ be a convex body with $h_K(e_1)=h_K(-e_1)\neq 0$, $h_K(e_2)=h_K(-e_2)\neq 0$
but $a:=h_K(e_1+e_2)\neq h_K(-e_1-e_2)=:b$. Note that for every $a,b>0$, there exists a convex body satisfying these conditions. 
Since $\di K$ is a convex body, it holds, as claimed, 
\[
\begin{split}
h_M(a,b) &=  h_M(h_K(e_1+e_2),h_K(-e_1-e_2)) \\ &\leq h_M(h_K(e_1),h_K(-e_1))+h_M(h_K(e_2),h_K(-e_2))=0.
\end{split}
\]

Next we show that $M$ is necessarily a segment of the form $[(-r,-r),(-s,-s)]$, for some $0\leq s\leq r$. In order to do it, it is enough to prove that $h_M(1,-1)\leq 0$ and $h_M(-1,1)\leq 0$. Let us consider convex bodies $K,L$ satisfying $h_K(e_1+e_2)=-1$, $h_K(-e_1-e_2)=1$, $h_L(e_1+e_2)=1$, $h_L(-e_1-e_2)=-1$ and $h_K(\pm e_1),h_K(\pm e_2), h_L(\pm e_1), h_L(\pm e_2)> 0$. Since $\di K$ and $\di L$ are convex we obtain, by using the previous claim, that $h_M(-1,1)\leq 0$ and $h_M(1,-1)\leq 0$. 

Hence, for any convex body $K\in\K^n$, we have $h(\di K,u)=h_M(h_K(u),h_K(-u))=\max\{-s,-r\}(h_K(u)+h_K(-u))=h(\di K,-u).$ Since $h(\di K,u)+h(\di K,-u)\geq 0$, we obtain $r=s=0$, and hence, $M=\{(0,0)\}$. 
\end{proof}

\begin{remark}\label{converse_hM11=0} We notice, that from the above proof it follows that
\begin{enumerate}
\item the converse of Lemma \ref{hM11=0} holds. Indeed, the image of a full dimensional $o$-symme\-tric convex body is the origin if and only if $h_M(1,1)=0$ (see Remark \ref{r: proj cov identity property}). 
\item if $\di:\K^n\to\K^n$ is a continuous $\GL(n)$-covariant operator and the image of some $o$-symmetric convex body (not 0-dim) is a point, then $\di\equiv 0$.
\end{enumerate}
\end{remark}

The next lemma will be very useful to prove most of the following results.

We will say that an operator $\di:\K^n\to\K^n$ is \emph{trivial}, if its image consists only of the origin.

\begin{lemma}\label{width ; width_comp} Let $n\geq 2$ and $\omega (K,u)=h(K,u)+h(K,-u)$ denote the width of $K\in\K^n$ in the direction $u\in S^{n-1}$.
\begin{enumerate}
\item If $\di:\K^n\to\K^n$ is a continuous and $\GL(n)$-covariant operator with $M\in\K^2$ as associated convex body, then 
$$\omega(K,u)h_M(1,1)\leq\omega(\di K,u),\quad\forall u\in S^{n-1}.$$

\item If $\di:\K^n\to\K^n$ is a continuous, non-trivial, $\GL(n)$-covariant operator, then $\aff K\subseteq \aff \di K$. In particular,
$\di$ maps convex sets of dimension $n$ into convex sets of dimension $n$.

\item If $K,L\in\K^n$ satisfy $\omega(K,u)\leq\omega(L,u)$ for every $u\in S^{n-1}$, then
$$\vol(K)\leq 2^{-n}\binom{2n}{n}\vol(L).$$
\end{enumerate}
\end{lemma}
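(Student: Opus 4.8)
The three parts are essentially independent and each reduces to a short support-function computation. Throughout, since $\di$ is continuous and $\GL(n)$-covariant, Lemma~\ref{diCGimpliesP} makes it projection covariant, so Theorem~\ref{thm2s} provides the associated body $M$ with $h(\di K,u)=h_M(h_K(u),h_K(-u))$ for all $u$ (using $h_{-K}(u)=h_K(-u)$).

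For (1), I would set $s=h_K(u)$ and $t=h_K(-u)$, so that $\omega(K,u)=s+t\ge 0$ and $\omega(\di K,u)=h_M(s,t)+h_M(t,s)$. Because $h_M$ is a support function, it is subadditive and positively homogeneous of degree one; applying this to the two vectors $(s,t)$ and $(t,s)$, whose sum is $(s+t)(1,1)$, gives $(s+t)\,h_M(1,1)=h_M\big((s+t)(1,1)\big)\le h_M(s,t)+h_M(t,s)$, which is exactly the claimed inequality $\omega(K,u)h_M(1,1)\le\omega(\di K,u)$.

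For (2), the first step is to note that non-triviality forces $h_M(1,1)>0$: by Remark~\ref{r: proj cov identity property} one has $h_M(1,1)\ge 0$, and if $h_M(1,1)=0$ then Lemma~\ref{hM11=0} would make $\di$ trivial. With $h_M(1,1)>0$, part (1) shows that $\omega(K,u)>0$ implies $\omega(\di K,u)>0$; equivalently, the directions in which $\di K$ has zero width are among those in which $K$ has zero width. Since $\omega(K,u)=0$ characterises exactly the $u$ orthogonal to the linear subspace parallel to $\aff K$, this inclusion of zero-width directions yields the inclusion of the subspace parallel to $\aff K$ into the one parallel to $\aff\di K$, and in particular $\dim\di K\ge\dim K$, so $n$-dimensional bodies are sent to $n$-dimensional bodies.

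For (3), the crucial identity is $\omega(K,u)=h_K(u)+h_{-K}(u)=h_{DK}(u)$, so the hypothesis $\omega(K,u)\le\omega(L,u)$ for all $u$ is precisely $h_{DK}\le h_{DL}$, i.e.\ $DK\subseteq DL$ and hence $\vol(DK)\le\vol(DL)$. Chaining this with the Brunn-Minkowski lower bound and the Rogers-Shephard upper bound from \eqref{eq: RS and BM} gives $2^n\vol(K)\le\vol(DK)\le\vol(DL)\le\binom{2n}{n}\vol(L)$, which rearranges to the desired estimate. The only genuinely delicate point is the bookkeeping in (2)---correctly translating the width comparison into a statement about affine hulls and dimensions---whereas (1) and (3) are immediate once the representation from Theorem~\ref{thm2s} and the identity $\omega(K,\cdot)=h_{DK}$ are in place.
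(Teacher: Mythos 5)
Your proof is correct and takes essentially the same route as the paper: part (1) is the subadditivity estimate $h_M(s,t)+h_M(t,s)\geq (s+t)h_M(1,1)$ for the support function of $M$, part (2) combines (1) with the fact that non-triviality forces $h_M(1,1)>0$ (your appeal to Lemma \ref{hM11=0} and Remark \ref{r: proj cov identity property} is exactly the content of Remark \ref{converse_hM11=0}, which the paper cites), and part (3) is the identity $\omega(K,\cdot)=h_{DK}$ giving $DK\subseteq DL$, chained with \eqref{eq: RS and BM} precisely as in the paper. One small point in your favour: in (2) you carefully prove inclusion of the linear subspaces \emph{parallel} to the affine hulls (hence $\dim K\leq\dim\di K$), which is all the width argument yields and all the paper ever uses; the literal inclusion $\aff K\subseteq\aff\di K$ as stated can fail (take $\di=D$ and $K=\{p\}$ with $p\neq 0$, where $\aff K=\{p\}\not\subseteq\{0\}=\aff DK$), so your phrasing is the correct reading of the lemma.
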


\begin{proof}\hfill
\begin{enumerate}
\item It follows directly from the properties of the support function of $M$, that
\[
\begin{split}
\omega(\di K,u)= & h_M(h_K(u),h_K(-u))+h_M(h_K(-u),h_K(u))\\ & \geq (h_K(u)+h_K(-u))h_M(1,1)=\omega(K,u)h_M(1,1).
\end{split}
\]

\item It follows directly from the first item and Remark \ref{converse_hM11=0}.

\item From the hypothesis, it follows that $h(K+(-K),u)\leq h(L+(-L),u)$ for every $u\in S^{n-1}$, thus, $DK\subseteq DL$. Now, using \eqref{eq: RS and BM} we obtain, as stated,
$$\vol(K)\leq 2^{-n}\vol(DK)\leq 2^{-n}\vol(DL)\leq 2^{-n}\binom{2n}{n}\vol(L).$$
\end{enumerate}
\end{proof}

The following remark will be used in the following. We include it for future reference.
\begin{remark}\label{r: M segment x=y}
Let $\di$ be a continuous and $\GL(n)$-covariant operator and $M$ its associated body.
If $M$ is a segment on the line $\{(x,y)\,:\,x=y\}\subset\R^2$, then $\di K= \lambda DK$ for certain $\lambda\geq 0$.
\end{remark}

The next result states that if the continuous and $\GL(n)$-covariant operator $\di$ preserves dimensions, then it is not far from being the difference body operator.

\begin{proposition}\label{equalDim}
Let $n\geq 2$ and $0\leq k\leq n-1$. An operator $\di:\K^n\to\K^n$ is continuous, $\GL(n)$-covariant and satisfies $\dim K=\dim\di K$ for any $K\in\K^n$ of $\dim K=k$ if and only if there are $a,b\geq 0$ such that $\di K=aK+b(-K)$.
\end{proposition}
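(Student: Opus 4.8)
The plan is to derive the structure from projection covariance, exactly as in the proofs of Theorems \ref{thm2s} and \ref{th: ti}. The ``if'' direction is a direct verification: any operator $\di K=aK+b(-K)$ with $a,b\ge 0$ is continuous and $\GL(n)$-covariant, since scaling and reflection commute with linear maps and with Minkowski addition; moreover, when $a+b>0$ it sends a $k$-dimensional body $K$ with $\aff K=p+W$ (where $W=\lin(K-K)$ has dimension $k$) to a body with affine hull $(a-b)p+W$, which is again $k$-dimensional, while the case $a=b=0$ only concerns $k=0$. So I concentrate on the ``only if'' direction. Assume $\di$ is continuous, $\GL(n)$-covariant and preserves the dimension of $k$-dimensional bodies. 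By Lemma \ref{diCGimpliesP} it is projection covariant, so Theorem \ref{thm2s} yields an associated body $M\in\K^2$ with $h(\di K,x)=h_M(h_K(x),h_{-K}(x))$. If $\di$ is trivial we are done with $a=b=0$ (and then necessarily $k=0$), so assume $\di$ is non-trivial; then Lemma \ref{width ; width_comp}(2) gives $\aff K\subseteq\aff\di K$ for every $K$.

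The key step is to show that $\di$ maps every point to a point. Fix $q\neq 0$ and a $k$-dimensional linear subspace $W\subseteq q^{\perp}$, which is possible since $k\le n-1$ and hence $\dim q^{\perp}=n-1\ge k$. Choose a $k$-dimensional body $K_0\subseteq W$ and set $K=q+K_0$, so that $\dim K=k$ and $\aff K=q+W$. Writing $u=q/|q|\in W^{\perp}$, the projection $K\,|\,\lin\{u\}$ is the single point $\langle q,u\rangle u$, so by projection covariance $(\di K)\,|\,\lin\{u\}=\di(K\,|\,\lin\{u\})$ equals $\di$ applied to a point. Were this projection a nondegenerate segment, $\di K$ would contain two points whose difference has a nonzero $u$-component; since $u\perp W$ and the direction space of $\aff\di K$ already contains $W$ (because $\aff K\subseteq\aff\di K$), this would force $\dim\di K\ge k+1$, contradicting dimension preservation. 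Hence $(\di K)\,|\,\lin\{u\}$, and therefore $\di(\{q\})$, is a point; as $q\neq 0$ was arbitrary, $\di(\{q\})$ is a point for all $q$.

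It remains to identify $M$. Since $h(\di\{q\},x)=h_M(\langle q,x\rangle,-\langle q,x\rangle)$ depends only on $c=\langle q,x\rangle$ and must be linear in $x$, it must be linear in $c$; using the homogeneity of $h_M$ we have $h_M(c,-c)=c\,h_M(1,-1)$ for $c\ge 0$ and $h_M(c,-c)=-c\,h_M(-1,1)$ for $c\le 0$, so matching the slopes across $c=0$ forces $h_M(1,-1)+h_M(-1,1)=0$. Thus $M$ has zero width in the direction $(1,-1)$ and lies on a line parallel to the diagonal $\{x=y\}$. Parametrizing this line and using that $s+t\ge 0$ on the domain $H$, the maximum defining $h_M$ is attained at the endpoint $(a,b)$ of $M$ that is extreme in the direction $(1,1)$, whence $h_M(s,t)=as+bt$ on $H$ and therefore $h(\di K,x)=a\,h_K(x)+b\,h_{-K}(x)$ for all $K$ and all $x$. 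Finally, testing this identity on a triangle $K$ (for which there exist $x,y$ with $h_K(x)+h_K(y)>h_K(x+y)$ but $h_{-K}(x)+h_{-K}(y)=h_{-K}(x+y)$) shows that $a\,h_K+b\,h_{-K}$ cannot be subadditive, hence cannot be a support function, unless $a\ge 0$; the symmetric argument gives $b\ge 0$. Therefore $\di K=aK+b(-K)$ with $a,b\ge 0$.

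The main obstacle is the projection/width argument of the second paragraph, showing that points are sent to points. This is precisely where the hypothesis $k\le n-1$ is used, since it provides a direction $u$ transverse to $\aff K$ along which a failure of dimension preservation would be detected, and it is what pins $M$ down to a line parallel to the diagonal. The remaining work — reducing $M$ to its effective endpoint on $H$ and checking the sign constraints $a,b\ge 0$ — is routine.
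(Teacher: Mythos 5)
Your proof is correct and is essentially the paper's own argument: both rest on Lemma \ref{diCGimpliesP}, Theorem \ref{thm2s} and the width estimates of Lemma \ref{width ; width_comp}, and your ``points map to points'' step is exactly the paper's computation $\omega(\di K,u)=h_K(u)\,\omega(M,(1,-1))=0$ for a $k$-dimensional $K$ with $\omega(K,u)=0$ and $h(K,u)>0$, rephrased through the projection onto $\lin\{u\}$. Your only additions are pieces of bookkeeping the paper leaves implicit (via Remark \ref{r: M segment x=y}), namely the endpoint identification $h_M(s,t)=as+bt$ on $H$, the explicit sign check $a,b\ge 0$, and the $a=b=0$ caveat in the converse.
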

\begin{proof}
Let $u\in S^{n-1}$ and $K\in\K^n$ with $\dim K=k$ be such that $\omega(K,u)=0$ and $h(K,u)>0$. Then, since $\dim K=\dim\di K$, using Lemma \ref{width ; width_comp} we obtain that $\omega(\di K,u)=0$. 
From $h(K,u)=-h(K,-u)$, it follows
\begin{align*}\omega(\di K,u)&=h_M(h_K(u),h_K(-u))+h_M(h_K(-u),h_K(u))\\&=h_K(u)(h_M(1,-1)+h_M(-1,1))\\&=h_K(u)\omega(M,(1,-1)).
\end{align*}

If $\omega(M,(1,-1))>0$,  
according to the above calculation, $\omega(\di K,u)>0$. Hence, it holds $\omega(M,(1,-1))=0$. 
The latter implies that $M$ is a (possibly degenerated) segment lying in a line parallel to $\{x=y\}$.
Let $M=[(a,a),(b,b)]+(p,0)$ with $a,b,p\in\R$ and $a\leq b$. Then, using Remark \ref{r: M segment x=y} we obtain
\begin{align*}
h_{\di K}(x)&=h_{M}(h_K(x),h_{-K}(x))=h_{[(a,a),(b,b)]}(h_K(x),h_{-K}(x))+ph_K(x)
\\&=\lambda h_{K+(-K)}(x)+ph_K(x)=h_{(\lambda +p)K+\lambda(-K)}(x).
\end{align*}
The converse is clear. 
\end{proof}

We notice that for the proof it is enough to assume only, that for some $K\in\K^n$ with $0\leq \dim K \leq n-1$ there is an appropriate translate
of it, $K+t$ such that $\dim (K+t)=\dim \di (K+t)$.

We would also like to remark the following consequence:
\begin{corollary}
Let $n\geq 2$. An operator $\di:\K^n\to\K^n$ is continuous, $\GL(n)$-covariant and satisfies $\di\{p\}=\{0\}$ for some $p\neq 0$ if and only if there is a $\lambda\geq 0$ such that $\di K=\lambda DK$.
\end{corollary}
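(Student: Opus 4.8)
The backward implication I would dispose of immediately: if $\di K=\lambda DK$ then $\di\{p\}=\lambda D\{p\}=\lambda(\{p\}+\{-p\})=\lambda\{0\}=\{0\}$ for \emph{every} $p$, and $\lambda DK$ is continuous and $\GL(n)$-covariant since $D$ is. The substance lies in the forward direction, and the plan is to recognize that the lone hypothesis $\di\{p\}=\{0\}$ already places us in the dimension-preserving regime governed by Proposition \ref{equalDim}.

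The point I would exploit is that $\{p\}$ is a $0$-dimensional convex body whose image $\di\{p\}=\{0\}$ is again $0$-dimensional, so that $\dim\{p\}=\dim\di\{p\}=0$, and $0\le 0\le n-1$ holds since $n\ge 2$. By the observation recorded after Proposition \ref{equalDim}, to conclude $\di K=aK+b(-K)$ it is enough to produce a single body together with a translate on which $\di$ preserves dimension; I would take the body $\{p\}$ itself with the trivial translate $t=0$. This yields constants $a,b\ge 0$ with $\di K=aK+b(-K)$ for all $K\in\K^n$.

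It then remains only to pin down the relation between $a$ and $b$, and here the hypothesis $p\ne 0$ does the work: evaluating the derived form at $\{p\}$ gives $\di\{p\}=a\{p\}+b\{-p\}=\{(a-b)p\}$, which must equal $\{0\}$, forcing $(a-b)p=0$ and hence $a=b$. Consequently $\di K=a\bigl(K+(-K)\bigr)=a\,DK$, so $\lambda=a\ge 0$ is the sought constant.

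I do not anticipate a genuine obstacle once Proposition \ref{equalDim} is available; the only step deserving a moment's attention is confirming that a $0$-dimensional body legitimately triggers that proposition. Its proof selects a direction $u$ with $\omega(\{p\},u)=0$ and $h(\{p\},u)>0$, and any $u$ with $\langle p,u\rangle>0$ serves — such $u$ exists precisely because $p\ne 0$. Thus $p\ne 0$ enters twice: it makes the dimension-preservation argument applicable, and it is exactly what excludes the spurious branch $a\ne b$.
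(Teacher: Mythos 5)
Your proposal is correct and matches the paper's (implicit) argument: the corollary is stated there as a direct consequence of Proposition \ref{equalDim} -- via the remark that a single dimension-preserving translate of one lower-dimensional body suffices -- applied to $K=\{p\}$ with $t=0$, after which $\di\{p\}=\{(a-b)p\}=\{0\}$ together with $p\neq 0$ forces $a=b$, i.e.\ $\di K=a\,DK$. Your closing verification that $p\neq 0$ supplies a direction $u$ with $\omega(\{p\},u)=0$ and $h(\{p\},u)>0$ is precisely the point needed to run the proof of Proposition \ref{equalDim} on a $0$-dimensional body, so nothing is missing.
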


As a consequence of the above proof we observe the following remark.

\begin{remark}\label{r: w for  lower dim}
Let $\di:\K^n\to\K^n$ be a continuous and $\GL(n)$-covariant operator and $M$ the body associated to it. It holds, 
\begin{enumerate}
\item if $K\in\K^n$ with $0\leq\dim K\leq n-1$, then $\omega(\di K,u)=h_K(u)\omega(M,(1,-1))$ for every $u\in S^{n-1}$ such that $\omega(K,u)=0$, 
\item $\omega(M,(1,-1))\neq 0$ if and only if $\dim K <\dim \di K$ for some $K\in \K^n$ of $\dim K\leq n-1$.
\end{enumerate}
\end{remark}

Now we are ready to prove Theorem \ref{t:RS+cont+GL aK+b(-K)}.
\begin{proof}[Proof of Theorem \ref{t:RS+cont+GL aK+b(-K)}]
First, we recall that under the assumption of RS, if $K\in \K^n$ with $\dim K\leq n-1$ then we necessarily have $\dim \di K\leq n-1$. 
Thus, from Lemma \ref{width ; width_comp}.i and Proposition \ref{equalDim} with $k=n-1$, we can assert that $\omega(M,(1,-1))=0$ and we have, as in Proposition \ref{equalDim}, that $\di K=aK+b(-K)$ for some $a,b\geq 0$. 

It is clear, that each operator of the form $K\mapsto aK+b(-K)$, $a,b\geq 0$ satisfies a Rogers-Shephard inequality. 
\end{proof}

The next corollary indicates that asking for RS is as strong as translation invariance if $o$-symmetrization
is assumed, as it is the case of Theorem \ref{t: GHW class DK}.

\begin{corollary}\label{cor:TI-RS SL}
Let $n\geq 2$. An $o$-symmetrization $\di:\K^n\to\K^n_s$ is continuous, $\GL(n)$-covariant and satisfies RS
if and only if there exists a $\lambda\geq 0$ such that $\di K=\lambda DK$.
\end{corollary}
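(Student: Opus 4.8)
The plan is to deduce Corollary \ref{cor:TI-RS SL} directly from Theorem \ref{t:RS+cont+GL aK+b(-K)} by exploiting the extra hypothesis that $\di$ is an $o$-symmetrization, i.e.\ that $\di K$ is origin-symmetric for \emph{every} $K\in\K^n$. First I would invoke Theorem \ref{t:RS+cont+GL aK+b(-K)}: since $\di:\K^n\to\K^n_s\subseteq\K^n$ is continuous, $\GL(n)$-covariant and satisfies RS, there exist $a,b\geq 0$ with
\[
\di K=aK+b(-K)\quad\text{for all }K\in\K^n.
\]
It then remains to show that the $o$-symmetry of the image forces $a=b$, since in that case $\di K=a\bigl(K+(-K)\bigr)=aDK$ and we may set $\lambda=a\geq 0$.

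The main step is to pin down $a=b$. I would test the identity on a single non-symmetric body whose difference body structure separates $a$ from $b$. A convenient choice is a body $K$ with $h_K(u_0)\neq h_K(-u_0)$ for some direction $u_0\in S^{n-1}$; for instance a point $K=\{p\}$ with $p\neq 0$, or a segment, or a simplex. For $K=\{p\}$ we get $\di\{p\}=a\{p\}+b\{-p\}=\{ap-bp\}=\{(a-b)p\}$, a single point. The $o$-symmetry of $\di\{p\}$ means $\di\{p\}=-\di\{p\}$, so $(a-b)p=-(a-b)p$, whence $(a-b)p=0$, and since $p\neq 0$ we conclude $a=b$. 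Alternatively, comparing support functions in the direction $u_0$ for a body with $h_K(u_0)\neq h_K(-u_0)$ gives $h_{\di K}(u_0)=ah_K(u_0)+bh_K(-u_0)$ and, by symmetry, $h_{\di K}(u_0)=h_{\di K}(-u_0)=ah_K(-u_0)+bh_K(u_0)$; subtracting yields $(a-b)\bigl(h_K(u_0)-h_K(-u_0)\bigr)=0$, so again $a=b$.

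I do not expect a genuine obstacle here, as the corollary is essentially a specialization of the theorem. The only point requiring care is to ensure the witness body actually lies in the domain $\K^n$ and genuinely breaks the symmetry $h_K(u_0)=h_K(-u_0)$; a one-point set $\{p\}$, $p\neq 0$, or any non-centrally-symmetric body suffices and keeps the argument clean. Finally, the converse is immediate: for any $\lambda\geq 0$ the difference body operator $\lambda D$ is continuous, $\GL(n)$-covariant, has origin-symmetric image, and satisfies RS by the right-hand inequality in \eqref{eq: RS and BM} with constant $C=\lambda^n\binom{2n}{n}$.
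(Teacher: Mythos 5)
Your proposal is correct and follows essentially the same route as the paper: the paper also invokes Theorem \ref{t:RS+cont+GL aK+b(-K)} to write $\di K=aK+b(-K)$ and then forces $a=b$ by evaluating the symmetry condition $h(\di K,u)=h(\di K,-u)$ on a body breaking the symmetry between $u$ and $-u$ (the paper takes $h(K,-u)=0$, $h(K,u)>0$; your witness $\{p\}$ or any $K$ with $h_K(u_0)\neq h_K(-u_0)$ works just as well).
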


 \begin{proof}
From Theorem \ref{t:RS+cont+GL aK+b(-K)}, there exist $a,b\geq 0$ such that $\di K=a K+ b(-K)$ for $K\in\K^n$.
We have to prove that $a=b$. In order to do so, it is enough to consider $K\in \K^n$ with $h(K,-u)=0$ and $h(K,u)>0$, 
which an appropriate translation of any full dimensional ($\dim K=n$) convex body does.
Hence, since $K$ satisfies $h(\di K,u)=h(a K+ b(-K),u)=h(a K+ b(-K),-u)=h(\di K,-u)$, we easily obtain $a=b$.
 \end{proof}

\section{Interaction of other properties with projection covariance}\label{sec: GL}

In the previous sections we have seen that continuity and $\GL(n)$-covariance (or equivalently projection covariance) 
play a strong classifying role for the difference body, being however, on their own, not enough for it.
In this section we gather together some aspects of the interplay of continuity and $\GL(n)$-covariance
with further assumptions such as Brunn-Minkowski inequality, monotonicity or additivity.
We also provide (further) examples showing that removing either $\GL(n)$-covariance or continuity forces us to assume several additional assumptions
in order to get some close to the difference body. 

We start proving that continuity and $\GL(n)$-covariance imply the homothety property. 
We say that a non-trivial operator $\di:\K^n\to\K^n$ satisfies the \emph{homothety property} if there exists some $\lambda>0$ such that 
$\di K=\lambda K$ for every $K\in\K^n_s$. If $\lambda=1$ one gets the \emph{identity property} used in \cite{gardner.hug.weil1}.

\begin{proposition}\label{r: SL cov takes sym to sym}
Let $n\geq 2$. If $\di:\K^n\to\K^n$  is a non-trivial, continuous, $\GL(n)$-covariant operator, then it has the homothety property.
\end{proposition}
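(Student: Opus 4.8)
The plan is to reduce everything to the structural description already available for continuous $\GL(n)$-covariant operators. First I would invoke Lemma \ref{diCGimpliesP} to pass from continuity and $\GL(n)$-covariance to projection covariance, and then Theorem \ref{thm2s} to produce an associated body $M\in\K^2$ with $h(\di K,x)=h_M(h_K(x),h_{-K}(x))$ for all $K\in\K^n$ and all $x\in\R^n$.

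Next I would specialize to $o$-symmetric bodies. If $K\in\K^n_s$, then $K=-K$, so $h_K=h_{-K}$; moreover $0\in K$ forces $h_K\geq 0$ on $S^{n-1}$. Substituting this into the formula above and using the positive homogeneity of the support function $h_M$, exactly as recorded in Remark \ref{r: proj cov identity property}, yields $h(\di K,u)=h_M(1,1)h_K(u)$ for every $u$. Writing $\lambda:=h_M(1,1)$, this says $\di K=\lambda K$ for every $K\in\K^n_s$, with $\lambda\geq 0$ by the same remark. The crucial point is that a single $\lambda$ works simultaneously for all $o$-symmetric bodies, which is precisely what the homothety property demands.

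The only thing left is to exclude the degenerate value $\lambda=0$, and this is where the non-triviality hypothesis enters. If $h_M(1,1)=0$, then Lemma \ref{hM11=0} forces $\di K=\{0\}$ for every $K\in\K^n$, i.e. $\di$ is trivial, contrary to assumption. Hence $\lambda=h_M(1,1)>0$ and $\di$ has the homothety property. I expect this last step---ruling out $\lambda=0$---to be the only genuine point of the argument, and it is already handled by the earlier lemma; the remainder is a direct consequence of the representation in Theorem \ref{thm2s} together with the identities $h_K=h_{-K}\geq 0$ on symmetric bodies.
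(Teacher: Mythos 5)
Your proof is correct and takes essentially the same route as the paper: apply the representation $h(\di K,u)=h_M\left(h_K(u),h_K(-u)\right)$ of Theorem \ref{thm2s} to $K=-K$, use $h_K=h_{-K}\geq 0$ and the homogeneity of $h_M$ to get $\di K=h_M(1,1)K$, and rule out $h_M(1,1)=0$ by non-triviality. You are in fact slightly more explicit than the paper, whose two-line proof leaves the exclusion of $\lambda=0$ via Lemma \ref{hM11=0} implicit in ``from which the result follows.''
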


\begin{proof}
From Theorem \ref{thm2s} we know that $h(\di K,u)=h_M(h_K(u),h_K(-u))$.
Thus, if $K=-K$, we immediately obtain $h(\di K,u)=h_M(1,1)h(K,u)$ from which the result follows.
\end{proof}

\begin{remark}
We notice that an operator having the homothety property needs not be either continuous or $\GL(n)$-covariant:
\[
\di K = \left\{\begin{array}{lr} 
 K, & \text{  if  }  K =-K\\
 B_n, & \text{  otherwise  }  
\end{array}\right.
\]
where $B_n$ is the unit Euclidean ball.
\end{remark}

Next we prove that if a non-trivial operator is continuous and $\GL(n)$-covariant, then it satisfies BM. Indeed even more can be proven:
such an operator satisfies a Brunn-Minkowski type inequality for every quermassintegral.

We recall that the {\it Steiner formula} states that the volume of the Minkowski sum of a convex body $K$ 
and a non-negative dilation of $B_n$ is a polynomial 
\[
\vol(K+\rho B_n)=\sum_{i=0}^{n}\rho^i\binom{n}{i}W_i(K),
\]
whose coefficients $W_i(K)$, up to normalization, are
the so-called \emph{quermassintegrals of $K$}. From the above it follows that $\W_0(K)=\vol(K)$ and $W_n(K)=\vol(B_n)$ for all $K$.
Indeed, this is a particular case of a much more general fact (see \cite{schneider.book14}): for convex bodies $K_1,\dots,K_m$ and $\lambda_1,\dots,\lambda_m\in\R$ non-negative, the volume of the linear combination $\lambda_1 K_1+\dots +\lambda_m K_m$ is a homogeneous polynomial, whose coefficients are the so-called \emph{mixed volumes} (of whose quermassintegrals are particular cases), namely
\begin{equation}\label{eq: mixed vol}
\vol(\lambda K_1+\dots +\lambda K_m)=\sum_{1\leq i_1,\dots,i_n\leq m}\lambda_{i_1}\dots\lambda_{i_n}V(K_{i_1},\dots,K_{i_n}).
\end{equation}

\begin{theorem}\label{BMinGL}
Let $n\geq 2$. If $\di:\K^n\to\K^n$ is a non-trivial, continuous and $\GL(n)$-covariant operator, then it satisfies a Brunn-Minkowski type inequality for every quermassintegral $\W_i$, $i=0,\dots,n-1$, i.e., there is a constant $c_{n,i,\di}>0$ such that 
$$W_i(K)\leq c_{n,i,\di}W_i(\di K).$$
\end{theorem}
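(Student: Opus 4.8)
The plan is to leverage the key structural fact, already established in Theorem~\ref{thm2s} and Lemma~\ref{diCGimpliesP}, that a non-trivial, continuous, $\GL(n)$-covariant operator $\di$ has an associated planar body $M$ with $h(\di K,u)=h_M(h_K(u),h_K(-u))$, and moreover (by Lemma~\ref{width ; width_comp}.i together with Remark~\ref{converse_hM11=0}) that $h_M(1,1)>0$ since $\di$ is non-trivial. The central idea is that the width of $\di K$ in every direction is controlled \emph{from below} by a positive multiple of the width of $K$: indeed Lemma~\ref{width ; width_comp}.i gives $\omega(\di K,u)\geq h_M(1,1)\,\omega(K,u)$ for all $u\in S^{n-1}$. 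Thus, up to the positive constant $h_M(1,1)$, the body $\di K$ is at least as ``wide'' as $K$ in every direction, and the goal becomes a comparison of quermassintegrals under a pointwise width inequality.

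First I would record the width comparison precisely. Setting $c:=h_M(1,1)>0$, Lemma~\ref{width ; width_comp}.i yields $\omega(cK,u)=c\,\omega(K,u)\leq\omega(\di K,u)$ for every $u\in S^{n-1}$. By the argument in the proof of Lemma~\ref{width ; width_comp}.iii, a pointwise inequality between widths $\omega(A,u)\leq\omega(B,u)$ forces the inclusion of difference bodies $DA\subseteq DB$, since $h_{DA}(u)=\omega(A,u)\leq\omega(B,u)=h_{DB}(u)$. Applying this with $A=cK$ and $B=\di K$ gives $D(cK)\subseteq D(\di K)$, that is $c\,DK\subseteq D(\di K)$.

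Next I would pass from this inclusion to the quermassintegral inequality. Quermassintegrals are monotone under inclusion and homogeneous, so from $c\,DK\subseteq D(\di K)$ we get $c^{\,n-i}\W_i(DK)=\W_i(c\,DK)\leq \W_i(D(\di K))$ for each $i=0,\dots,n-1$. The remaining task is to compare $\W_i(DK)$ with $\W_i(K)$ from below, and $\W_i(D(\di K))$ with $\W_i(\di K)$ from above. For the lower direction, $DK=K+(-K)\supseteq K-t$ for a suitable translate, so by translation invariance and monotonicity of $\W_i$ one has $\W_i(DK)\geq\W_i(K)$; more cleanly, $K$ is a Minkowski summand of $DK$, and monotonicity of mixed volumes under the summand relation gives $\W_i(K)\leq\W_i(DK)$. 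For the upper direction one invokes a Rogers--Shephard type bound for quermassintegrals: since $\di K$ is a summand of $D(\di K)$, the difference body inequality generalized to quermassintegrals (or simply the inclusion $D(\di K)\subseteq 2\,\mathrm{relevant\ body}$ arguments combined with~\eqref{eq: mixed vol}) yields $\W_i(D(\di K))\leq \gamma_{n,i}\,\W_i(\di K)$ for a universal constant $\gamma_{n,i}>0$. Chaining these inequalities produces $\W_i(K)\leq c^{-(n-i)}\gamma_{n,i}\,\W_i(\di K)$, which is the claim with $c_{n,i,\di}=c^{-(n-i)}\gamma_{n,i}$.

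The main obstacle I anticipate is supplying a clean Rogers--Shephard type upper bound $\W_i(DL)\leq\gamma_{n,i}\W_i(L)$ for quermassintegrals rather than just for volume; the classical inequality~\eqref{eq: RS and BM} is stated only for $\vol=\W_0$. I would circumvent this either by citing the known Rogers--Shephard inequalities for intrinsic volumes/quermassintegrals from the literature (e.g.\ via the relation $\W_i(L)$ to volumes of projections and applying the planar-section argument), or by observing that since $L=\di K$ is a Minkowski summand of $DL$ and $DL\subseteq 2\conv(L\cup(-L))$, a monotonicity estimate bounds $\W_i(DL)$ by a constant times $\W_i(L)$. I would be careful to keep all constants strictly positive and independent of $K$, which they are since they depend only on $n$, $i$, and the fixed associated body $M$ through $c=h_M(1,1)>0$.
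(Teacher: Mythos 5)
Your proposal is correct in substance but takes a genuinely different route from the paper's. Both arguments start identically: non-triviality gives $h_M(1,1)>0$ (Lemma \ref{hM11=0} and Remark \ref{r: proj cov identity property}), and Lemma \ref{width ; width_comp}.i gives the width comparison $\omega(h_M(1,1)K,u)\le\omega(\di K,u)$ for all $u\in S^{n-1}$. From there the paper never leaves the operator $\di$: for $i=0$ it applies Lemma \ref{width ; width_comp}.iii directly, and for $i=1,\dots,n-1$ it invokes projection covariance (Lemma \ref{diCGimpliesP}) to transfer the width inequality to every subspace $E\in\Gr(l,n)$ via $\di(K|E)=(\di K)|E$, applies Lemma \ref{width ; width_comp}.iii inside $E$, and integrates using Kubota's formula \eqref{kubota}. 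You instead upgrade the width inequality to the inclusion $h_M(1,1)\,DK\subseteq D(\di K)$ and reduce everything to two facts about the classical difference body: the summand monotonicity $W_i(K)\le W_i(DK)$ (correct, via $K+y\subseteq K+(-K)$ for $y\in -K$ and translation invariance of $W_i$) and a Rogers--Shephard type inequality $W_i(DL)\le\gamma_{n,i}W_i(L)$. This reduction is clean, and its payoff is that projection covariance of $\di$ enters only once, through Lemma \ref{width ; width_comp}.i; its cost is that the RS inequality for quermassintegrals you need is proved by exactly the Kubota-plus-projection computation the paper runs on the fly, so the two arguments share the same engine, applied by you to $D$ and by the paper to $\di$.

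One caveat concerns how you propose to close your acknowledged gap. Your option (a) works: since $D(L|E)=(DL)|E$, applying the classical inequality \eqref{eq: RS and BM} in each $j$-dimensional projection and integrating with Kubota yields $W_i(DL)\le\binom{2(n-i)}{n-i}W_i(L)$, and no centering is needed because $D$ kills translations. Your option (b), as literally stated, does not: $W_i(\conv(L\cup(-L)))$ is not bounded by a constant times $W_i(L)$ without first translating $L$ (take $L$ a single point far from the origin and $i=n-1$: the hull is a long segment with $W_{n-1}>0$, while $W_{n-1}(L)=0$). The repair is to normalize the centroid of $L$ to the origin, which is legitimate since $DL$ is translation invariant; then the classical fact $-L\subseteq nL$ gives directly $DL\subseteq(n+1)L$, hence $W_i(DL)\le(n+1)^{n-i}W_i(L)$. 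This is precisely the trick the paper itself uses in the proof of the theorem immediately following Theorem \ref{BMinGL}, so the missing ingredient is available, and with either fix your chain $W_i(K)\le W_i(DK)\le h_M(1,1)^{-(n-i)}W_i(D(\di K))\le h_M(1,1)^{-(n-i)}\gamma_{n,i}W_i(\di K)$ is complete, with all constants positive and depending only on $n$, $i$ and $M$.
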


\begin{proof}
Let $M$ be the associated convex body to $\di$, ensured by Lemma \ref{diCGimpliesP} and Theorem \ref{thm2s}. Since $\di$ is non-trivial, $h_M(1,1)>0$ by Lemma \ref{hM11=0} and Remark \ref{r: proj cov identity property}.

We will prove first the statement for $i=0$, that is, for the volume. Let $K\in\K^n$.
From Lemma \ref{width ; width_comp}.i, we know that
$$\omega(h_M(1,1)K,u)\leq \omega(\di K,u),$$
for all $u\in S^{n-1}.$

Using now Lemma \ref{width ; width_comp}.iii, we have 
$$\vol(K)\leq (2(h_M(1,1))^{-n}\binom{2n}{n}\vol(\di K),$$
and hence, a Brunn-Minkowski type inequality for $\di$ and $\W_0=\vol$.

We notice that $M$ is associated to the operator $\di$ and not to some convex body, hence, the constant is independent of the convex body.

In order to prove the statement for $i=1,\dots,n-1$ we will make use of the so-called Kubota integral recursion formula for quermassintegrals, which states (see e.g. \cite[(5.72)]{schneider.book14}) 
\begin{equation}\label{kubota}
W_{n-l}(\di K)= c(l,n)\int_{E\in \Gr(l,n)}\vol_l((\di K)|E)dE,
\end{equation}
where $dE$ denotes the probability measure on $\Gr(l,n)$ and $c(l,n)$ is a constant depending only on $n,l$, $0\leq l \leq n$.

From Lemmas \ref{width ; width_comp}.i and \ref{diCGimpliesP}, we have
$$
\omega(h_M(1,1)(K|E),u)\leq\omega(\di(K|E),u)=\omega((\di K)|E, u)
$$
for any $E\in\Gr(l,n)$ and $u\in S^{n-1}$.

Lemma \ref{width ; width_comp}.iii applied on $E\in \Gr(l,n)$ and the projection covariance yield
$$
\vol_l(h_M(1,1)(K|E))\leq 2^{-l}\binom{2l}{l}\vol_l((\di K)|E).
$$
Using the homogeneity of the volume and plugging the latter inequality in
\eqref{kubota}  
we obtain the result.
\end{proof}

Using Proposition \ref{equalDim} for an $(n-l-1)$-dimensional convex body,
we obtain that the condition of satisfying a Rogers-Shephard for a  quermassintegrals {\it almost characterizes} the difference body operator, that is, the following statement holds: 

\begin{theorem}
Let $n\geq 2$. An operator $\di:\K^n\to\K^n$ is continuous $\GL(n)$-covariant and satisfies a Rogers-Shephard type inequality for some $l$-th quermassintegral, $l\in\{0,1,\dots,n-1\}$ if and only if there are $a,b\geq 0$ such that $\di K=aK+b(-K)$.
\end{theorem}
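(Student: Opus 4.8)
The plan is to treat the two implications separately, with essentially all the work going into the forward (``only if'') direction, which I would reduce to Proposition \ref{equalDim}. For the converse, note that if $\di K = aK+b(-K)$ with $a,b\ge 0$, then $h(\di K,u)=a\,h_K(u)+b\,h_K(-u)$ gives $\omega(\di K,u)=(a+b)\,\omega(K,u)$ for every $u$, and the same identity holds after projecting, since $(\di K)|E=a(K|E)+b(-(K|E))$. Applying Lemma \ref{width ; width_comp}.iii inside each $E\in\Gr(n-l,n)$ to compare $(\di K)|E$ with $(a+b)(K|E)$ (which have equal width functions) and then inserting the resulting bound into the Kubota formula \eqref{kubota}, one obtains $W_l(\di K)\le 2^{-(n-l)}\binom{2(n-l)}{n-l}(a+b)^{\,n-l}\,W_l(K)$, i.e.\ an RS inequality for $W_l$. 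This is just the upper-bound counterpart of the computation carried out in Theorem \ref{BMinGL}.

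For the forward direction the key observation concerns when $W_l$ vanishes. Writing $m=n-l\in\{1,\dots,n\}$, the Kubota formula \eqref{kubota} expresses $W_l(K)$ as a positive constant times the average of $\vol_m(K|E)$ over $E\in\Gr(m,n)$. Since $\vol_m(K|E)>0$ for some (equivalently, almost every) $E$ precisely when $\dim K\ge m$, this yields the dichotomy
\[
W_l(K)=0\iff \dim K\le n-l-1 .
\]
This is exactly the threshold that singles out the $(n-l-1)$-dimensional bodies as the natural test objects.

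Now I would argue as follows. If $\di$ is trivial, take $a=b=0$. Otherwise fix any $K\in\K^n$ with $\dim K=n-l-1$ (possible since $0\le n-l-1\le n-1$); then $W_l(K)=0$, so the assumed inequality $W_l(\di K)\le C\,W_l(K)$ forces $W_l(\di K)=0$ and hence $\dim\di K\le n-l-1=\dim K$. On the other hand, as $\di$ is non-trivial, Lemma \ref{width ; width_comp}.ii gives $\aff K\subseteq\aff\di K$, so $\dim\di K\ge\dim K$. Thus $\dim\di K=\dim K$ for every body of dimension $n-l-1$, and Proposition \ref{equalDim} applied with $k=n-l-1$ delivers $a,b\ge 0$ with $\di K=aK+b(-K)$. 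The point requiring the most care is the index bookkeeping in \eqref{kubota}: one must match $W_l$ with the $(n-l)$-dimensional projections so that the vanishing locus of $W_l$ is exactly $\{\dim K\le n-l-1\}$, and one must dispose of the trivial operator before invoking Lemma \ref{width ; width_comp}.ii, whose conclusion $\dim\di K\ge\dim K$ fails for $\di\equiv 0$.
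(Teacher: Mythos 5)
Your proof is correct, and its core strategy for the forward direction coincides with the paper's: test the operator on $(n-l-1)$-dimensional bodies, where $W_l$ vanishes, and reduce to Proposition \ref{equalDim}. The packaging differs in two worthwhile ways. First, the paper treats $l=0$ separately (deferring to Theorem \ref{t:RS+cont+GL aK+b(-K)}) and for $l\geq 1$ argues through the associated body: it picks $u$ with $\omega(K,u)=0$, invokes Remark \ref{r: w for lower dim} to get $\omega(\di K,u)=h(K,u)\,\omega(M,(1,-1))$, and derives a contradiction with RS for $W_l$ unless $\omega(M,(1,-1))=0$, then reuses the computation ``as in the proof of Proposition \ref{equalDim}''. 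You instead establish the dichotomy $W_l(K)=0\iff\dim K\le n-l-1$ via Kubota and sandwich $\dim K\le\dim\di K\le n-l-1$, which verifies the hypothesis of Proposition \ref{equalDim} with $k=n-l-1$ literally, lets you cite it as a black box, handles all $l\in\{0,\dots,n-1\}$ uniformly, and correctly isolates the trivial operator before using Lemma \ref{width ; width_comp}.ii (a caveat the paper glosses over, since its blanket claim $\dim K\le\dim\di K$ fails for $\di\equiv 0$). Second, for the converse the paper uses the inclusion $-K\subseteq nK$ for bodies with centroid at the origin, giving $aK+b(-K)\subseteq(a+bn)K$ and hence $W_l(\di K)\le(a+bn)^{n-l}W_l(K)$, whereas you run the width-comparison machinery of Theorem \ref{BMinGL} in reverse: equal width functions of $(\di K)|E$ and $(a+b)(K|E)$, Lemma \ref{width ; width_comp}.iii inside each $E\in\Gr(n-l,n)$, and \eqref{kubota}, yielding the constant $2^{-(n-l)}\binom{2(n-l)}{n-l}(a+b)^{n-l}$. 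Both converses are valid; the paper's is shorter, yours avoids the centroid normalization and makes the constant's provenance transparent. Your index bookkeeping in \eqref{kubota} (matching $W_l$ with $\vol_{n-l}$ over $\Gr(n-l,n)$) is the correct reading of the paper's formula.
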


\begin{proof}First we note that for $l=0$, the statement coincides with Theorem \ref{t:RS+cont+GL aK+b(-K)}. 

Let $M$ be the associated body to the operator $\di$ such that $W_l(\di K)\leq c W_l(K)$ for some $c>0$ and a fixed $l\in\{1,\dots,n-1\}$. 

Lemma \ref{width ; width_comp} yields $\dim K\leq\dim\di K$ for any $K\in \K^n$. 
Let $K\in\K^n$ be such that $\dim K=n-l-1$, so that $W_{l}(K)=0$. 

Since $1 \leq l\leq n-1$, we can find $u\in S^{n-1}$ such that $\omega(K,u)=0$. Then, from Remark~\ref{r: w for  lower dim} we have that $\omega(\di K, u)=h(K,u)\omega(M,(1,-1))$. 
If $\omega(K,(1,-1))>0$, just using an appropriate translation of $K$ instead of $K$, if necessary, it holds that there exists an $(n-l-1)$-dimensional convex $K$ with $\dim \di K\geq n-l$. Hence, $\di$ does not satisfies a Rogers-Shephard inequality for $W_{l}$. Thus, $\omega(M,(1,-1))=0$ which, as in the proof of Proposition \ref{equalDim} yields that $\di K=aK+b(-K)$ for some $a,b\geq 0$. 

We notice, that each operator $K\mapsto aK+b(-K)$, $a,b\geq 0$ satisfies a Rogers-Shephard inequality for $W_l$, $1\leq l\leq n-1$, using that
$-K\subseteq n K$ for $K\in\K^n$ having centroid at the origin. The latter yields the converse.
\end{proof}

Monotonicity happens to be, as BM, a property shared by any continuous and $\GL(n)$-covariant operator.
In order to prove it, we will make use of the so called $M$-sum.

Given a subset $M$ of $\R^2$ and sets $K,L\in\R^n$, the {\it $M$-sum} of $K,L$ is the set
$$
K +_M L=\{ax+by\,:\,x\in K,y\in L, (a,b)\in M\}.
$$
If $M\in\K^2$ and $K,L\in\K^n$, then it is not difficult to check, that
$$
h(K+_M L,u)=h_M(h_K(u),h_L(u))
$$
for $u\in S^{n-1}$. For further details we refer to \cite{gardner.hug.weil1}.
We notice, that the $M$-sum was already (implicitly) used in Theorem \ref{thm2s}.

\begin{proposition}\label{BM then monotonic}
Let $n\geq 2$. If $\di:\K^n\to\K^n$ is a continuous and $\GL(n)$-covariant operator, then it is monotonic.
\end{proposition}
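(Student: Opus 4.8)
The plan is to represent $\di$ through its associated planar body and then read off monotonicity from the \emph{$M$-sum}, for which monotonicity is essentially built in. First I would apply Lemma~\ref{diCGimpliesP} to conclude that $\di$ is projection covariant, and then Theorem~\ref{thm2s} to obtain a convex body $M\in\K^2$ with
\[
h(\di K,u)=h_M\bigl(h_K(u),h_{-K}(u)\bigr)\qquad\text{for all }K\in\K^n\text{ and }u\in\R^n.
\]
Comparing this with the defining identity of the $M$-sum, $h(K+_M L,u)=h_M(h_K(u),h_L(u))$, and specializing to $L=-K$, I would record the key identity
\[
\di K=K+_M(-K)\qquad\text{for every }K\in\K^n,
\]
valid because both bodies have the same support function.

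Next I would turn the hypothesis $K\subseteq L$ into a statement about the two summands. Since $K\subseteq L$ is equivalent to $h_K\le h_L$ pointwise, it is equally equivalent to $h_{-K}\le h_{-L}$, i.e.\ to $-K\subseteq -L$; hence passing from $K$ to a larger body $L$ enlarges \emph{both} arguments of $K+_M(-K)$ simultaneously. Now I would invoke the monotonicity of the $M$-sum, which is visible directly from its set description $K+_M(-K)=\{ax+by:x\in K,\ y\in -K,\ (a,b)\in M\}$: every such point also satisfies $x\in L$ and $y\in -L$, so $K+_M(-K)\subseteq L+_M(-L)$, and passing to (closed) convex hulls preserves the inclusion. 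Combined with the identity above this yields $\di K\subseteq \di L$, which is precisely monotonicity.

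The one point to handle with care is why no extra hypothesis on $M$ is needed. A direct approach would require showing that $h_M$ is coordinatewise nondecreasing on the cone $\{(s,t):s+t\ge 0\}$ of attainable pairs $(h_K(u),h_{-K}(u))$, which is not transparent. The $M$-sum representation sidesteps this entirely, since monotonicity is intrinsic to the set operation; the only genuine bookkeeping is the observation that the reflection $K\mapsto -K$ \emph{preserves} inclusions, so that both summands grow at once. This is the step I expect to be most easily mis-stated, and the one I would write out explicitly.
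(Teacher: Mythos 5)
Your proposal is correct and takes essentially the same route as the paper: the paper's proof likewise invokes Theorem \ref{thm2s} to write $\di K = K +_M (-K) = \bigcup_{(a,b)\in M} aK + b(-K)$ and reads off $\di K \subseteq \di L$ from this set description. The only difference is that you spell out the bookkeeping the paper compresses into ``it follows directly''---namely that $K\subseteq L$ also gives $-K\subseteq -L$, so both arguments of the $M$-sum grow simultaneously.
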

\begin{proof}
From Theorem \ref{thm2s} we know that 
$$\di K=K +_M (-K)=\bigcup_{(a,b)\in M} aK+b(-K).$$ 
It follows directly from $K\subseteq L$, that
$\di K \subseteq \di L$.
\end{proof}

In the next result we impose additivity to the continuous and $\GL(n)$-covariant operator. We recall that an operator $\di:\K^n\to\K^n$ is said to be \emph{additive} if for any $K,L\in \K^n$ it satisfies that
$\di(K+L)=\di K +\di L$.

\begin{proposition}
Let $n\geq 2$. An operator $\di:\K^n\to\K^n$ is continuous, $\GL(n)$-covariant and additive if and only if there are $a,b\geq 0$ such that $\di K=aK+b(-K)$.
\end{proposition}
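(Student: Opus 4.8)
The plan is to prove the two implications separately, the converse being immediate and the forward direction carrying the content. For the converse, I would simply check that $\di K=aK+b(-K)$ with $a,b\geq 0$ has all three properties: it is continuous because Minkowski addition, dilation by nonnegative scalars and the reflection $K\mapsto -K$ are continuous; it is $\GL(n)$-covariant because every $g\in\GL(n)$ commutes with Minkowski sums and with $K\mapsto -K$, so $\di(gK)=g(aK)+g(b(-K))=g\,\di K$; and it is additive since $a(K+L)+b(-(K+L))$ regroups as $(aK+b(-K))+(aL+b(-L))$. These are routine.

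For the forward implication I would first invoke Lemma \ref{diCGimpliesP} and Theorem \ref{thm2s} to obtain an associated body $M\in\K^2$ with $h(\di K,x)=h_M(h_K(x),h_{-K}(x))$ for all $K$ and $x$. The key reduction is to transfer additivity of $\di$ into a functional equation for $h_M$ on the half-plane $H=\{(s,t):s+t\geq 0\}$. Using $h_{K+L}=h_K+h_L$ and $h_{-(K+L)}=h_{-K}+h_{-L}$, the identity $\di(K+L)=\di K+\di L$ becomes $h_M(s_1+s_2,t_1+t_2)=h_M(s_1,t_1)+h_M(s_2,t_2)$ for pairs $(s_i,t_i)=(h_{K_i}(x),h_{-K_i}(x))$. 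Here I must confirm that these realizable pairs exhaust $H$: taking $K,L$ to be segments in a fixed direction $u$, a segment $[pu,qu]$ produces the pair $(q,-p)$ evaluated at $x=u$, and the constraint $p\leq q$ is exactly $s+t\geq 0$, so as $p,q$ vary the pairs sweep out all of $H$. Hence $h_M$ is additive on the whole cone $H$.

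Next I would upgrade additivity to linearity: since $h_M$ is a support function it is sublinear and positively homogeneous of degree one, and a positively homogeneous function that is additive on the two-dimensional cone $H$ must be linear there. Concretely, writing any $(s,t)\in H$ as a nonnegative combination of $(1,0),(0,1)\in H$ and the boundary ray $(1,-1)\in H$ and applying additivity gives $h_M(s,t)=as+bt$ on $H$, with $a=h_M(1,0)$ and $b=h_M(0,1)$. To obtain $a,b\geq 0$ I would use that $\di$ is monotonic by Proposition \ref{BM then monotonic}, together with $\di\{0\}=\{0\}$ (which follows from $h_M(0,0)=0$): for $K=[0,u]$ we have $0\in K$, hence $\{0\}=\di\{0\}\subseteq\di K$, so $0\in\di K$ and therefore $a=h(\di K,u)\geq 0$; the choice $K=[-u,0]$ gives $b\geq 0$ symmetrically. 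Since $(h_K(x),h_{-K}(x))$ always lies in $H$, this yields $h(\di K,x)=a\,h_K(x)+b\,h_{-K}(x)=h_{aK+b(-K)}(x)$, i.e. $\di K=aK+b(-K)$. I expect the main obstacle to be precisely the reduction step: verifying that the realizable pairs $(h_K(x),h_{-K}(x))$ fill out the entire cone $H$, so that the Cauchy-type equation holds on all of $H$ rather than on a thin subset, after which the linearity and nonnegativity arguments are comparatively routine.
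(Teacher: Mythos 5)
Your proposal is correct in substance but takes a genuinely different route from the paper. The paper's proof is a two-line test: it feeds additivity a single pair of bodies realizing the support pairs $(c,-c)$ and $(-c,c)$ (e.g.\ two opposite points works), concludes $\omega(M,(1,-1))=0$, and then invokes the already-established geometric classification (Remark \ref{r: M segment x=y} and the computation in Proposition \ref{equalDim}): zero width of $M$ in direction $(1,-1)$ pins $M$ to a line parallel to $\{x=y\}$, whence $\di K=aK+b(-K)$. You instead derive the full Cauchy equation $h_M(s_1+s_2,t_1+t_2)=h_M(s_1,t_1)+h_M(s_2,t_2)$ on all of $H=\{(s,t):s+t\ge 0\}$, by realizing every pair in $H$ through collinear segments $[pu,qu]$ (your verification that these pairs sweep out $H$, and that Minkowski sums of collinear segments stay collinear, is exactly right), and then solve it using positive homogeneity. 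Your route is self-contained -- it does not lean on Proposition \ref{equalDim} -- and it makes explicit something the paper leaves implicit: the nonnegativity $a,b\ge 0$, which you cleanly extract from monotonicity (Proposition \ref{BM then monotonic}) together with $\di\{0\}=\{0\}$ and the segments $[0,u]$, $[-u,0]$. What the paper's approach buys is economy: one test pair plus reuse of structure already proved; what yours buys is a transparent, standalone functional-equation argument.

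One slip to fix: the cone generated by $(1,0)$, $(0,1)$, $(1,-1)$ is $\{(s,t): s\ge 0,\ s+t\ge 0\}$, not all of $H$ -- it misses points such as $(-1,2)$. Your headline claim (additive plus positively homogeneous on $H$ implies linear on $H$) is nevertheless true and the repair is one line within your own framework: for $s<0\le s+t$ apply additivity to $(s,t)+(-s,0)=(0,s+t)$, giving $h_M(s,t)=b(s+t)-h_M(-s,0)=as+bt$; equivalently, include the fourth generator $(-1,1)$ and use the relation $h_M(-1,1)+h_M(1,0)=h_M(0,1)$, which additivity supplies since $(-1,1)+(1,0)=(0,1)$ in $H$.
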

\begin{proof}Let $u\in S^{n-1}$, $c>0$ and $K, L\in\K^n$ such that $(h_K(u),h_K(-u))=(c,-c)$ and $(h_L(u),h_L(-u))=(-c,c)$. Then,
$$h(\di(K+L),u)=h_M(c-c,-c+c)=0 \text{  and  }$$
$$h(\di K+\di L,u)=h_M(c,-c)+h_M(-c,c)=c\, \omega(M,(1,-1)).$$
Thus, $\di$ is additive if and only if $\omega(M,(1,-1))=0$, that is, if and only if $\di K=aK+b(-K)$ for some $a,b\geq 0$ (cf. Remark \ref{r: M segment x=y}).
\end{proof}

We have seen above, that the assumptions of continuity and $\GL(n)$-covariance, Rogers-Shephard inequality {\it almost} characterize the difference body (see Theorem \ref{t:RS+cont+GL aK+b(-K)}).
 
A similar phenomenon happens when to continuity and $\GL(n)$-covariance is added the hypothesis of Minkowski valuation.  
In \cite{wannerer.equiv}, T. Wannerer proved that if we omit the assumption of translation invariance in Ludwig's classification of the difference body (Theorem \ref{t: ludwig class DK}), we are actually not that far from the difference body.

\begin{teor}[\cite{wannerer.equiv}]\label{t: class Wannerer}Let $n\geq 3$. An operator $\di:\K^n\to\K^n$ is a continous, $\GL(n)$-covariant Minkowski valuation if and only if there are $a,b,c,d\geq 0$ such that 
\begin{equation}\label{eq:GL-MVal}
\di K=aK+b(-K)+c\conv(\{0\}\cup K)+d\conv(\{0\}\cup (-K)).
\end{equation}
\end{teor}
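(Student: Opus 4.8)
The plan is to reduce the whole problem to the two-dimensional \emph{associated body} $M$ and then to read off the four parameters from the shape of $M$. Since $\di$ is assumed continuous and $\GL(n)$-covariant, Lemma~\ref{diCGimpliesP} makes it projection covariant, so Theorem~\ref{thm2s} applies and yields a planar convex body $M\subset\R^2$ with
\[
h(\di K,x)=h_M\bigl(h_K(x),h_{-K}(x)\bigr),\qquad \di K=K+_M(-K),
\]
for all $K\in\K^n$ and all $x\in\R^n$. Writing $s^+=\max\{s,0\}$, a direct computation of support functions shows that the right-hand side of \eqref{eq:GL-MVal} corresponds exactly to
\[
h_M(s,t)=as+bt+c\,s^++d\,t^+,
\]
that is, to $M$ being the axis-parallel rectangle $[a,a+c]\times[b,b+d]$ contained in the first quadrant. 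Thus the theorem is \emph{equivalent} to the assertion that, among all continuous $\GL(n)$-covariant operators, the Minkowski valuation property singles out precisely those whose associated body is such a rectangle; my whole effort goes into extracting this rectangle from the valuation hypothesis.

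For the (easy) converse I would simply check that each of the four generators $K\mapsto K$, $K\mapsto -K$, $K\mapsto\conv(\{0\}\cup K)$ and $K\mapsto\conv(\{0\}\cup(-K))$ is a continuous, $\GL(n)$-covariant Minkowski valuation, and that all three properties are preserved under forming nonnegative Minkowski combinations. Continuity and $\GL(n)$-covariance are immediate. For the valuation identity I would pass to support functions: when $K\cup L\in\K^n$ one has $h_{K\cup L}=\max\{h_K,h_L\}$ and $h_{K\cap L}=\min\{h_K,h_L\}$, so that, for instance, $\max\{0,h_{K\cup L}\}+\max\{0,h_{K\cap L}\}=\max\{0,h_K\}+\max\{0,h_L\}$ pointwise, which is exactly the valuation identity for $K\mapsto\conv(\{0\}\cup K)$; the identity operator is handled by $(K\cup L)+(K\cap L)=K+L$, and the reflected versions follow by symmetry.

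For the forward direction the heart is to turn the valuation identity into a functional equation for $h_M$. Evaluating $\di(K\cup L)+\di(K\cap L)=\di K+\di L$ at a direction $x$ and using the $\max/\min$ formulas above gives
\[
h_M\bigl(\max\{s_1,s_2\},\max\{t_1,t_2\}\bigr)+h_M\bigl(\min\{s_1,s_2\},\min\{t_1,t_2\}\bigr)=h_M(s_1,t_1)+h_M(s_2,t_2),
\]
where $(s_i,t_i)$ run over the pairs $(h_K(x),h_{-K}(x))$ realizable in the half-plane $H=\{(s,t):s+t\ge0\}$ with $K\cup L\in\K^n$. This says precisely that $h_M$ is \emph{modular} (its mixed second difference vanishes), whose solutions are the separable functions $h_M(s,t)=\phi(s)+\psi(t)$. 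As $h_M$ is also a support function, hence convex and positively homogeneous of degree $1$, each of $\phi,\psi$ is a one-dimensional support function, so $\phi(s)=\alpha s^++\beta s^-$ and $\psi(t)=\gamma t^++\delta t^-$ with $s^-=\max\{-s,0\}$ and $\alpha+\beta\ge0$, $\gamma+\delta\ge0$. Finally I would invoke that $\di K$ must itself be a convex body for \emph{every} $K$: the term $\beta\,(h_K(x))^-=\beta\max\{0,-h_K(x)\}$ is, for a body with $0\notin K$, a non-convex function of $x$, so convexity of $x\mapsto h(\di K,x)$ forces $\beta\le0$, and symmetrically $\delta\le0$. Setting $a=-\beta$, $b=-\delta$, $c=\alpha+\beta$, $d=\gamma+\delta$ (all $\ge0$) then reproduces \eqref{eq:GL-MVal}.

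The main obstacle is the step that realizes \emph{enough} pairs of bodies $K,L$ with $K\cup L\in\K^n$ to force the modularity equation on all of $H$: prescribing the four values $h_K(\pm x),h_L(\pm x)$ while keeping the union convex is a genuine geometric construction, and I expect that it is exactly here that the hypothesis $n\ge3$ enters, the planar case $n=2$ admitting additional $\GL(2)$-covariant valuations not captured by a rectangular $M$. A secondary, more technical point is making the final convexity argument (forcing $\beta,\delta\le0$) rigorous, i.e.\ exhibiting, for each sign to be excluded, an explicit body whose image fails to be convex.
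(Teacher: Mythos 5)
Your skeleton coincides with the paper's actual proof (given there as Theorem \ref{t: Wannerer n=2}, which establishes the statement for all $n\ge 2$): pass to the associated body $M$ via Lemma \ref{diCGimpliesP} and Theorem \ref{thm2s}, feed the valuation identity through $h_{K\cup L}=\max\{h_K,h_L\}$, $h_{K\cap L}=\min\{h_K,h_L\}$ to get a functional equation for $h_M$ on $H$, and recognize \eqref{eq:GL-MVal} as saying that $M$ is the rectangle $[a,a+c]\times[b,b+d]$; your converse check is also fine. But the step you yourself flag as the main obstacle --- realizing enough pairs $(s_1,t_1),(s_2,t_2)$ with $K\cup L$ convex --- is left unproved, and your guesses about it are wrong on both counts. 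It is closed by one-line constructions with \emph{collinear segments}: $K=[-\beta u,0]$, $L=[0,\alpha u]$ gives $h_M(\alpha,\beta)=h_M(\alpha,0)+h_M(0,\beta)$ on the positive quadrant, and $K'=[-\alpha u,0]$, $L'=[-\beta u,-\alpha u]$ (with $\beta\ge\alpha$, plus homogeneity) pins down $h_M$ on $\{s<0\le t\}$, symmetrically on $\{t<0\le s\}$; this yields the piecewise-linear formula \eqref{eq:hM-MVal} directly, with no detour through the abstract modularity-implies-separability lemma (which, incidentally, needs care on $H$: realizable quadruples must have all four lattice corners in $H$, since $K\cap L$ is a body). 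Because segments exist in every dimension, nothing here requires $n\ge 3$: the paper proves the identical classification for $n=2$, so your expectation that the planar case admits additional valuations is refuted by the very paper, and the restriction $n\ge 3$ in the cited statement is an artifact of Wannerer's original method, not of this approach.

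The second admitted gap, forcing $\beta\le 0$ and $\delta\le 0$ in your parametrization $\phi(s)=\alpha s^++\beta s^-$, $\psi(t)=\gamma t^++\delta t^-$, is genuinely needed and not routine: convexity of $h_M$ on $H$ only gives $\alpha+\beta\ge 0$, $\gamma+\delta\ge 0$, and generic test bodies fail (a ball $K$ far from the origin has $h_K$ and $h_{-K}$ with identical curvature term $r|x|$ on the cone $\{h_K<0\}$ and only yields $\gamma\ge\beta$). One needs a body whose support function is strictly curved on directions where it is negative while $h_{-K}$ is flat there; e.g.\ a half-disk $K=z_0+\{z:|z|\le r,\ z_1\le 0\}$ with $z_0=Re_1$, $R$ large: along directions $x=(-1,\tau)$ one gets $h(\di K,x)=-\beta\bigl(-R+r\sqrt{1+\tau^2}\bigr)+\gamma\bigl(R+r|\tau|\bigr)$, whose second derivative in $\tau$ is $-\beta r(1+\tau^2)^{-3/2}$, so convexity of $h(\di K,\cdot)$ forces $\beta\le 0$ (and symmetrically $\delta\le 0$). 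So your route can be completed and is essentially the paper's, but as written the two steps that carry all the content --- the explicit realizing bodies and the sign determination --- are asserted rather than proved, and the dimension heuristic attached to the first one is incorrect.
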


Let us notice that the continuous, $\GL(n)$-covariant operator $\di K=\conv(\{0\}\cup K)$ can be obtained as
$$h(\di K,u)=\max\{0,h(K,u)\}=h_{[(0,0),(1,0)]}(h_K(u),h_K(-u)),$$
i.e., a body $M$, associated to $\di$ (cf. Theorem \ref{thm2s}) can be chosen to be the segment joining the origin and the point $(1,0)$. 

Hence, the body $M:=\{(a,0)\}+\{(0,b)\}+c[(0,0),(1,0)]+d[(0,0),(0,1)]$ is an associated body to \eqref{eq:GL-MVal}. In other words, for any $u\in S^{n-1}$, $h(\di K,u)$ is given by
\begin{equation}\label{eq:hM-MVal}
h_M(h_K(u),h_K(-u))=\left\{\begin{array}{ll}(a+c)h_K(u)+(b+d)h_K(-u), &\textrm{ if }h_K(u),h_K(-u)\geq 0
\\ (a+c)h_K(u)+bh_K(-u), &\textrm{ if }h_K(u)\geq 0,\, h_K(-u)< 0
\\ ah_K(u)+(b+d)h_K(-u), &\textrm{ if }h_K(u)<0,\, h_K(-u)\geq 0.
\end{array}\right.
\end{equation}

Using Theorem \ref{thm2s}, we prove that Theorem \ref{t: class Wannerer} also holds for $n=2$ and give a new proof of it. 
\begin{theorem}\label{t: Wannerer n=2}
Let $n\geq 2$. An operator $\di:\K^n\to\K^n$ is a continous, $\GL(n)$-covariant Minkowski valuation if and only if there are $a,b,c,d\geq 0$ such that
\begin{equation}\label{eq:GL-MVal 2}
\di K=aK+b(-K)+c\conv(\{0\}\cup K)+d\conv(\{0\}\cup (-K)).
\end{equation}
\end{theorem}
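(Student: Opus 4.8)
The plan is to read the Minkowski valuation hypothesis as a structural constraint on the associated body furnished by Lemma \ref{diCGimpliesP} and Theorem \ref{thm2s}. For the \emph{if} direction I would only observe that each of $K\mapsto K$, $K\mapsto -K$, $K\mapsto\conv(\{0\}\cup K)$ and $K\mapsto\conv(\{0\}\cup(-K))$ is continuous, $\GL(n)$-covariant and a Minkowski valuation: the valuation identity for the last two reduces, via $h_{K\cup L}=\max\{h_K,h_L\}$ and $h_{K\cap L}=\min\{h_K,h_L\}$ (valid when $K\cup L$ is convex), to the trivial identity $\phi(\max\{x,y\})+\phi(\min\{x,y\})=\phi(x)+\phi(y)$ applied to $\phi=\max\{0,\cdot\}$; and these three properties are preserved under Minkowski combinations with nonnegative coefficients. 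The associated body is then the zonotope displayed before the statement, with support function \eqref{eq:hM-MVal}.

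For the \emph{only if} direction, Lemma \ref{diCGimpliesP} and Theorem \ref{thm2s} give $M\in\K^2$ with $h(\di K,u)=h_M(h_K(u),h_K(-u))$. I would convert the valuation hypothesis into a functional equation for $h_M$ on the half-plane $H=\{(s,t):s+t\ge0\}$: evaluating the valuation identity for $\di$ at $u$ and writing $s_i,t_i$ for the values $h_K(\pm u),h_L(\pm u)$, the relations $h_{K\cup L}=\max\{h_K,h_L\}$, $h_{K\cap L}=\min\{h_K,h_L\}$ yield
\begin{equation*}
h_M(\max\{s_1,s_2\},\max\{t_1,t_2\})+h_M(\min\{s_1,s_2\},\min\{t_1,t_2\})=h_M(s_1,t_1)+h_M(s_2,t_2)
\end{equation*}
for all pairs in $H$ realizable in this way. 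Realizing an arbitrary incomparable pair by splitting a suitable convex body with a hyperplane, as in \cite{gardner.hug.weil1}, this modularity forces $h_M$ to be additively separable on $H$, say $h_M(s,t)=\alpha(s)+\beta(t)$. Since $h_M$ is convex and positively homogeneous of degree $1$, each of $\alpha,\beta$ is convex and $1$-homogeneous on $\R$, hence of the form $\alpha(s)=as+c\max\{0,s\}$ and $\beta(t)=bt+d\max\{0,t\}$ with $c,d\ge0$; comparing with \eqref{eq:hM-MVal} gives $h(\di K,u)=a\,h_K(u)+b\,h_K(-u)+c\max\{0,h_K(u)\}+d\max\{0,h_K(-u)\}$.

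The main obstacle is to upgrade this to $a,b\ge0$, since convexity of $M$ only yields $c,d\ge0$, and tests with $o$-symmetric bodies or bodies containing the origin detect merely $a+c\ge0$ and $b+d\ge0$. Here I would exploit that $\di K$ is a genuine convex body, so $h(\di K,\cdot)$ is subadditive, and feed it a carefully chosen polytope. In a coordinate $2$-plane take the triangle $K=\conv\{(2,2),(0,1),(1,0)\}$ with directions $u=-e_1$, $v=-e_2$. Then $h_K(u)=h_K(v)=0$ while $h_K(u+v)=-1<0$, so the kink of $\alpha$ is probed on $\{s<0\}$; at the same time the vertex $(2,2)$ maximizes all of $e_1,e_2,e_1+e_2$, so $h_K(-u)+h_K(-v)=h_K(-(u+v))$ and the $\beta$-terms cancel exactly in the inequality $h(\di K,u+v)\le h(\di K,u)+h(\di K,v)$. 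What survives is precisely $-a\le0$, i.e.\ $a\ge0$; running the same computation on $-K$ with the same directions yields $b\ge0$.

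With all four coefficients nonnegative, the expression for $h(\di K,\cdot)$ obtained above is exactly the support function of $aK+b(-K)+c\conv(\{0\}\cup K)+d\conv(\{0\}\cup(-K))$, and since a convex body is determined by its support function the representation \eqref{eq:GL-MVal 2} follows. The delicate point to keep in mind throughout is that $M$ is pinned down only on $H$ and is not unique off it, so every realization and every test direction must be arranged to keep the relevant pairs inside $H$.
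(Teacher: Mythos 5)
Your proposal is correct and follows essentially the paper's own route: obtain the associated body $M$ from Lemma \ref{diCGimpliesP} and Theorem \ref{thm2s}, convert the valuation identity into a functional equation for $h_M$ on $H$ realized by one-dimensional configurations, and settle the converse by noting that each summand in \eqref{eq:GL-MVal 2} is itself a continuous, $\GL(n)$-covariant Minkowski valuation. The differences are one of organization and one of substance. Organizationally, the paper evaluates the valuation identity on three explicit families of pairs $(K,L)$ and reads off linearity of $h_M$ on the positive quadrant and on the two wedges of $H$, while you pass through the general modularity identity $h_M(s_1\vee s_2,t_1\vee t_2)+h_M(s_1\wedge s_2,t_1\wedge t_2)=h_M(s_1,t_1)+h_M(s_2,t_2)$ and deduce additive separability; these are equivalent (your rectangle quadruples with all four corners in $H$ are exactly what the paper's families realize, e.g.\ by nested segments on $\R u$), and your caveat that $M$ is pinned down only on $H$ is well placed. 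Substantively, your triangle argument for $a,b\ge 0$ supplies a step the paper's written proof glosses over: the paper identifies $a=h_M(0,1)-h_M(-1,1)$, $a+c=h_M(1,0)$, $b+d=h_M(0,1)$ and simply asserts \eqref{eq:GL-MVal 2}, but convexity of $h_M$ only forces $c,d\ge 0$ (gradient monotonicity across the kink rays $\{s=0,\,t\ge0\}$ and $\{t=0,\,s\ge0\}$ inside $H$); indeed $(s,t)\mapsto as+bt+c\max\{0,s\}+d\max\{0,t\}$ is a support function on $\R^2$ for arbitrary $a,b$, and testing on points or segments through the origin only detects $a+c\ge0$ and $b+d\ge0$. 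Your test body does detect the signs: I checked that for $K=\conv\{(2,2),(0,1),(1,0)\}$, $u=-e_1$, $v=-e_2$ one gets $h(\di K,u)=h(\di K,v)=2(b+d)$ and $h(\di K,u+v)=-a+4(b+d)$, so subadditivity of $h(\di K,\cdot)$ yields $a\ge 0$, and the same computation on $-K$ gives $4(a+c)-b\le 4(a+c)$, i.e.\ $b\ge 0$. Only two small points deserve explicit care in a write-up: the $1$-homogeneity of $\alpha$ and $\beta$ requires a normalization (say $\alpha(0)=\beta(0)=0$, using that the separable decomposition is unique up to an additive constant) and must be argued along rays staying inside the cone $H$; and the realizability of the rectangle quadruples should be spelled out, e.g.\ with the segments $K=[-tu,su]$, $L=[-t'u,s'u]$, which is precisely what the paper's three configurations do.
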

\begin{proof}Let $M$ be an associated convex body to $\di$, $u\in S^{n-1}$ and $\alpha,\beta>0$. 
Choose convex bodies $K,L$ such that 
$$h(K,u)=h(L,-u)=h(K\cap L,u)=h(K\cap L,-u)=0,$$ 
$$h(L,u)=h(K\cup L,u)=\alpha, \quad h(K,-u)=h(K\cup L,-u)=\beta.$$
Since $\di:\K^n\to \K^n$ is a Minkowski valuation, it holds
$$h_M(\alpha,\beta)+h_M(0,0)=h_M(0,\beta)+h_M(\alpha,0),$$
that is,
$$h_M(\alpha,\beta)=\alpha h_M(1,0)+\beta h_M(0,1),$$
and $h_M(\alpha,\beta)$ is a linear function when $\alpha,\beta\geq 0$. 

Next, by choosing $K'$ and $L'$ such that 
$$h(K',u)=h(K'\cup L',u)=0,\quad h(L',-u)=h(K'\cup L',-u)=\beta,$$ 
$$h(K',-u)=h(K'\cap L',-u)=-h(L',u)=-h(K'\cap L',u)=\alpha,$$
it similarly follows $h_M(-\alpha, \beta)
=\alpha(h_M(0,1)-h_M(-1,1))+\beta h_M(0,1)$.

Finally, by choosing $K''$ and $L''$ such that 
$$h(K'',-u)=h(K''\cup L'',-u)=0,\quad h(L'',u)=h(K''\cup L'',u)=\beta,$$ 
$$h(K'',u)=h(K''\cap L'',u)=-h(L'',-u)=-h(K''\cap L'',-u)=\alpha,$$
it follows $h_M(\beta, -\alpha)=\beta h_M(1,0)+\alpha(h_M(-1,1)-h_M(1,0))$.

The three conditions obtained for the support function of $h_M$ imply that \eqref{eq:hM-MVal} is satisfied considering $a=h_M(0,1)-h_M(-1,1)$, $b=h_M(-1,1)-h_M(1,0)$, $a+c=h_M(1,0)$ and $b+d=h_M(0,1)$. Thus, $\di K$ is given as in \eqref{eq:GL-MVal 2}.

Since each of the summands of \eqref{eq:GL-MVal 2} defines a continuous, $\GL(n)$-covariant operator which is a Minkowski valuation, the result follows. 
\end{proof}

Using Theorem \ref{t: class Wannerer} and Theorem \ref{t: Wannerer n=2} we obtain the following result for $o$-symmetrizations:
\begin{corollary}Let $n\geq 2$. An $o$-symmetrization $\di:\K^n\to\K^n_s$ is a continuous, $\GL(n)$-covariant Minkowski valuation if and only if there are $a,b\geq 0$ such that
$$\di K=aDK+bD(\conv(\{0\}\cup K)).$$ 
\end{corollary}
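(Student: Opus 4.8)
The plan is to feed the two classification results, Theorem \ref{t: class Wannerer} (for $n\geq 3$) and Theorem \ref{t: Wannerer n=2} (for $n=2$), which together cover all $n\geq 2$, into the extra hypothesis that $\di$ is an $o$-symmetrization. By those theorems a continuous, $\GL(n)$-covariant Minkowski valuation $\di$ is necessarily of the form
$$\di K = aK + b(-K) + c\conv(\{0\}\cup K) + d\conv(\{0\}\cup(-K))$$
for some $a,b,c,d\geq 0$, with associated support function given explicitly by \eqref{eq:hM-MVal}. It then remains only to single out the quadruples $(a,b,c,d)$ for which the image lies in $\K^n_s$ and to rewrite the corresponding operators in the stated form.

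First I would translate the $o$-symmetry condition $\di K=-\di K$ into support functions. Since $h(-\di K,\cdot)=h(\di K,-\,\cdot)$, being an $o$-symmetrization is equivalent to $h(\di K,u)=h(\di K,-u)$ for all $K\in\K^n$ and all $u\in S^{n-1}$. Writing $s=h_K(u)$ and $t=h_K(-u)$ and using \eqref{eq:hM-MVal}, this is precisely $h_M(s,t)=h_M(t,s)$, i.e.\ the associated body $M$ is symmetric about the diagonal $\{x=y\}$. As $K$ ranges over $\K^n$ and $u$ over $S^{n-1}$, the pair $(s,t)$ realizes every point of $H=\{(s,t):s+t\geq 0\}$; in particular both sign patterns $s,t>0$ and $s>0>t$ (with $s+t\geq 0$) occur, which can be produced by suitable balls of radius $(s+t)/2$ centred so that $\langle p,u\rangle=(s-t)/2$. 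Reading off \eqref{eq:hM-MVal}, on the region $s,t\geq 0$ the identity $h_M(s,t)=h_M(t,s)$ becomes $(a+c)s+(b+d)t=(a+c)t+(b+d)s$, forcing $a+c=b+d$; on the region $s\geq 0>t$ it becomes $(a+c)s+bt=at+(b+d)s$, which, after using $a+c=b+d$, collapses to $(a-b)t=0$, so that $a=b$ and hence $c=d$.

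Substituting $a=b$ and $c=d$ into the classification yields
$$\di K = a\bigl(K+(-K)\bigr) + c\bigl(\conv(\{0\}\cup K)+\conv(\{0\}\cup(-K))\bigr) = aDK + cD\bigl(\conv(\{0\}\cup K)\bigr),$$
which is the asserted form after relabelling $c$ as $b$. For the converse I would note that $K\mapsto DK$ and $K\mapsto D(\conv(\{0\}\cup K))$ are both continuous, $\GL(n)$-covariant Minkowski valuations (the latter being the sum of the two valuations $\conv(\{0\}\cup K)$ and $\conv(\{0\}\cup(-K))$, and sums of Minkowski valuations are again Minkowski valuations) whose images are $o$-symmetric, since any body of the form $DL$ satisfies $-DL=DL$; nonnegative linear combinations preserve all of these properties. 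The step requiring the most care is the genuine realization of both sign patterns of $(s,t)$ by actual convex bodies, respecting the constraint $s+t=\omega(K,u)\geq 0$; once that is in place everything else is a direct computation from \eqref{eq:hM-MVal}.
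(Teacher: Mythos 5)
Your proposal is correct and follows essentially the same route as the paper: both feed the classification $\di K=aK+b(-K)+c\conv(\{0\}\cup K)+d\conv(\{0\}\cup(-K))$ into the symmetry condition $h(\di K,u)=h(\di K,-u)$, first on the sign pattern $h_K(u)>0$, $h_K(-u)\geq 0$ to get $a+c=b+d$, then on $h_K(u)>0>h_K(-u)$ to get $a=b$ and $c=d$. Your phrasing via symmetry of the associated body $M$ about the diagonal, and your explicit realization of the sign patterns by off-center balls, are only cosmetic refinements of the paper's choice of test bodies.
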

\begin{proof}Let $u\in S^{n-1}$ and $K\in\K^n$ such that $h(K,u)>0$ and $h(K,-u)=0$. Theorem \ref{t: Wannerer n=2} ensures that
\[
\begin{split}
h(\di K,u)=& ah(K,u)+bh(K,-u)+c\max\{0,h(K,u)\}+d\max\{0,h(K,-u)\}\\ =&(a+c)h(K,u).
\end{split}
\]
On the other hand, using now $-u$ we have 
$$h(\di K,-u)=(b+d)h(K,u).$$

Since $\di K$ is $o$-symmetric, it necessarily holds $a+c=b+d$. 
Now choosing a convex body $K$ such that for $u\in S^{n-1}$ we have $h(K,u)>0$ and 
$h(K,-u)<0$, we will have that $h(\di K,u)=h(\di K,-u)$ if and only if the following equality holds:
$$(a+c)h(K,u)+bh(K,-u)=(b+d)h(K,u)+ah(K,-u).$$
The latter together with the already obtained relation for $a,b,c,d$ implies directly that $a=b$ and $c=d$, which proves the result.
\end{proof}

Finally we would like to remark that under $\GL(n)$-covariance and continuity, an operator is an $o$-symmetrization if and only if it is even. Indeed, since $-\mathrm{Id}\in\GL(n)$, we directly obtain, that $\di(-K)=-\di K$.

In the next subsection we will present some examples showing that a {\it random choice} of the properties we have been dealing with, does not, in general, get any close to the difference body.

\subsection{Examples of operators sharing properties with $DK$, but far from being it.}\label{ss: examples}

\medskip

First we would like to understand the role of RS in Theorems \ref{t: ludwig class DK} and \ref{t: GHW class DK}.
Since we have Theorem \ref{th: ti} and Theorem \ref{t: ludwig class DK}, it would only make sense to either replace one of the hypothesis in the latter by RS, or weaken any of them and add RS. Let us also recall, that replacing translation invariance by RS in both, Theorem \ref{t: ludwig class DK} and \ref{t: GHW class DK} has already been addressed in Theorem \ref{t:RS+cont+GL aK+b(-K)} and Corollary \ref{cor:TI-RS}.

Unfortunately, we are not aware of a non-continuous valuation which is $\GL(n)$-covariant, 
translation invariant and satisfies a Rogers-Shephard inequality. 
However, if any of the other assumptions in Theorems \ref{t: ludwig class DK} and \ref{t: GHW class DK} is replaced by Rogers-Shephard inequality, there is, in general, no possibility of getting close to a characterization of the difference body.
The following examples illustrate it.

\begin{example}\label{nochar}Let $L\in\K^n_s$ have dimension at most $n-1$. Then, the operator
$$\func{\di}{\K^n}{\K^n}{K}{L}$$
is a continuous, Minkowski valuation which is also an $o$-symmetrization and translation invariant. It satisfies RS but it is not $\GL(n)$-covariant. Further, it does not satisfy BM.
\end{example}

\begin{example}
Let $a(K)$ denote either the Steiner point of $K$ (see e.g. \cite[p. 50]{schneider.book14}) or the center of gravity (centroid) of $K$ (see e.g. \cite[p. 314]{schneider.book14}). The operator 
$$
K\mapsto \conv\left((K-a(K)) \,\cup\, (-K+a(-K))\right)
$$ 
satisfies BM and RS (\cite{RS paper 58}). Moreover, it is a continuous $o$-symmetrization. If $a(K)$ is the Steiner point, then the operator is also translation invariant.
\end{example}

\begin{example}\label{without GL}
Let $p\in\R^n$. The operator
$$\func{\di_p}{\K^n}{\K^n}{K}{K-p.}$$
is a continuous Minkowski valuation, which clearly satisfies RS and BM. 
However, $\di_p$ is neither an $o$-symmetrization, nor $\GL(n)$-covariant or translation invariant. 

If $p=\st(K)$, the Steiner point of $K$, then it is further $\O(n)$-covariant but not $\SL(n)$-covariant.
\end{example}

\begin{example}
The operator $$\func{\di}{\K^n}{\K^n}{K}{\vol(K)DK}$$
is a continuous, translation invariant and $\SL(n)$-covariant $o$-symmetrization. This example yields that the $\GL(n)$-covariance imposed in Theorems \ref{th: ti} and \ref{t: GHW class DK} cannot be weakened even to $\SL(n)$.
\end{example}

The previous example shows also that the three conditions continuity, Minkowski valuation and RS together, neither characterize the difference body nor imply $\GL(n)$-covariance.

\begin{example}
Let $B$ be a symmetric convex body with non-empty interior. We define
\[
\func{\di}{\K^n}{\K^n}{K}{DK\cap B.}
\]
The operator $\di$ is an $o$-symmetrization, continuous and translation invariant. It also satisfies the Rogers-Shephard inequality. 

However, $\di$ is neither a Minkowski valuation, nor $\GL(n)$-covariant, nor satisfies BM.
\end{example}

The latter shows that the four conditions: $o$-symmetrization, continuity, translation invariance and Rogers-Shephard inequality neither characterize the difference body nor imply $\GL(n)$-covariance.

Let us also notice that RS is not directly implied by all of the already treated properties, except for the already proven results dealing with the difference body and its relatives, as the following example shows.

\begin{example} 
For $C\in\K^2_s$, the complex difference bodies (see \cite{Abardia DcK}) defined by 
$$h(\D_CK,u)=\int_{S^1}h(\alpha K,u)dS(C,\alpha),\quad u\in S^{n-1},\, K\in\K(\C^n)$$ provide examples of continuous, $o$-symmetrizations, translation invariant Minkowski valuations which satisfy the Brunn-Minkowski inequality but not the Rogers-Shephard inequality. They are neither $\GL(n)$-covariant. 
\end{example}

The following example deals with the continuity condition.

\begin{example}
Let $L\in\K^n_s$ have dimension at most $n-1$. Then, the operator
$$
\di K = \left\{\begin{array}{ll} 
 DK, & \text{  if  } \dim K =n\\
 L, & \text{  otherwise  }  
\end{array}\right.
$$
is an $o$-symmetrization, translation invariant and satisfies both RS and BM. It is however, not continuous.

If $L$ is chosen to be the origin, then it is also $\GL(n)$-covariant, monotonic and 1-homo\-geneous.
\end{example}

\begin{example} 
Let $\omega(K)$ denote the mean width of $K$ (see e.g. \cite[(1.30)]{schneider.book14}). 
Let us consider the operator
$$\func{\di}{\K^n}{\K^n}{K}{B_{\omega(K)},}$$
where $B_{\omega(K)}$ denotes the ball centered at the origin and of radius $\omega(K)$.

The operator $\di$ is a continuous, $o$-symmetrization, translation invariant and Minkowski valuation which satisfies BM. It is also monotonic and homogeneous of degree 1. However, it neither satisfies RS nor is $\GL(n)$-covariant.
\end{example}
Note that if we change, in the last example, the mean width by any other intrinsic volume, then we would lose the Brunn-Minkowski inequality, since $\di$ inherits the homogeneity of the intrinsic volume.

\begin{remark}
We notice that homogeneity assumed together with RS and/or BM implies more precise values for the homogeneity degree. Indeed, let $q$ be the homogeneity degree of $\di$. Then using RS and/or BM we can write
$$c\lambda^n\vol(K)\leq \vol(\di (\lambda K))=\vol(\lambda^q \di K)=\lambda^{qn}\vol(K)\leq c\vol(\lambda K)=C\lambda^n\vol(K).$$
Thus, $q\leq 1$ if RS is assumed, $q\geq 1$ if BM is assumed and, in consequence, $q=1$ if BM and RS are assumed.
Hence, under RS and BM, we can replace $\GL(n)$-covariance assumption by $\SL(n)$-covariance and homogeneity. 
\end{remark}

Next examples prove that RS does not imply, in general, further good properties such us homogeneity or translation invariance.

\begin{example}
The operator
$$\di K=K\cap B_n$$
satisfies RS. It is also continuous and $\O(n)$-covariant. It has neither BM nor is translation invariant.
\end{example}

\begin{example}
Let, for every $K\in\K^n$, $\di K= L$ with $\dim L\leq n-1$. Then, $\di$ is a Minkowski valuation, homogeneous of degree $0$ which satisfies RS. 
\end{example}

\begin{example}\label{noMVal}
Let $\di K= \vol(K)^{1/n}B_n$. It is a continuous $o$-symmetrization satisfying BM and RS. Further, it is translation invariant and homogeneous of degree one. It is clearly not a Minkowski valuation.
\end{example}

\begin{example}
A remarkable example is given by 
$$
K\mapsto L+\vol(K)S
$$
where $S$ is a centered segment and $L$ is an $o$-symmetric $(n-1)$-dimensional convex body so that $\dim(S+L)=n$. 
The operator $\di$ is a continuous, translation invariant Minkowski valuation, and also an $o$-symmetrization which satisfies a Rogers-Shephard and a Brunn-Minkowski type inequality,
since
$$
\vol(L+\vol(K)S)=\vol(K)V(L[n-1],S).
$$
In a forthcoming work, we shall prove that a continuous, translation invariant Minkowski valuation, which is an $o$-symmetrization and satisfies a Rogers-Shephard and a Brunn-Minkowski type inequality is either of the above type or 1-homogeneous. 
\end{example}

Finally we would like to observe that in \cite{schneider74} an example of a translation invariant, $\GL(n)$-covariant valuation which is not continuous is provided. If we slightly modify this example, we can obtain a translation invariant, $\GL(n)$-covariant valuation which is also $o$-symmetrization, 1-homogeneous and satisfies BM, but is not continuous.
Let $SK$ denote the sum of the segments of the boundary of $K$, all centered at the origin. Then $K \mapsto DK+SK$ has the stated properties.

\end{document}